\newtheoremstyle{mytheoremstyle} 
{\topsep}                    
{\topsep}                    
{\itshape}                   
{1.5em}                           
{\itshape\bf}                   
{.---}                          
{.50em}                       
{}  
\theoremstyle{mytheoremstyle}
\newtheorem{thm}{Theorem}
\newtheorem{prop}{Proposition}
\newtheorem{lemma}{Lemma}
\newtheorem{cor}{Corollary}
\newtheorem*{cor*}{Corollary}
\newtheorem*{thm*}{Theorem}
\theoremstyle{definition}
\newtheorem{defn}{Definition}
\numberwithin{equation}{section}
\newcommand{\calF}{\mathcal{F}}
\newcommand{\frakc}{\mathfrak{c}}
\newcommand{\frakn}{\mathfrak{n}}
\newcommand{\fraks}{\mathfrak{s}}
\newcommand{\frakt}{\mathfrak{t}}
\newcommand{\frakz}{\mathfrak{z}}
\newcommand{\frakH}{\mathfrak{H}}
\newcommand{\bfx}{{\bf x}}
\newcommand{\bfy}{{\bf y}}
\newcommand{\bfz}{{\bf z}}
\newcommand{\CC}{\mathbb{C}}
\newcommand{\QQ}{\mathbb{Q}}
\newcommand{\RR}{\mathbb{R}}
\newcommand{\ZZ}{\mathbb{Z}}
\newcommand{\so}{\Longrightarrow}
\newcommand{\rar}{\rightarrow}
\newcommand{\wh}{\widehat}
\newcommand{\ol}{\overline}
\newcommand{\sgn}{\mathrm{sgn}}
\newcommand{\Sp}{\operatorname{Sp}}
\newcommand{\rank}{\mathrm{rank}}
\newcommand{\Sym}{\mathrm{Sym}}
\renewcommand{\Re}{\operatorname{Re}}
\newcommand{\GL}{{\mathrm{GL}}}
\newcommand{\diag}{\mathrm{diag}}
\newcommand{\mtwo}[1]{\left(\begin{array}{cc} #1 \end{array}\right)}
\renewcommand{\mod}{\text{mod }}
\newcommand{\legendre}[2]{\genfrac{(}{)}{}{}{#1}{#2}}
\newcommand{\floor}[1]{\lfloor #1 \rfloor}
\newlength{\dhatheight}
\title{\textsc{Theta Functions and Reciprocity Laws}}
\author{Z. Amir-Khosravi}
\date{}
\begin{document}
\maketitle

\begin{abstract}In the first part, we consider generalized quadratic Gauss sums as finite analogues of the Jacobi theta function, and the reciprocity law for Gauss sums as their transformation formula. We attach finite Dirichlet series to Gauss sums using a M\"obius transform, and show they have a functional equation, Euler product factorization, and roots only on the critical line. In the second part, we prove a general reciprocity law for sums of exponentials of rational quadratic forms in any number of variables, using the transformation formula of the Riemann theta function on the Siegel upper half-space.
\end{abstract}

\tableofcontents

\section*{Introduction}

Let $a,b,c\in \ZZ$ such that $ac+b$ is even and $ac\neq 0$. The reciprocity law for Gauss sums is the identity:
\begin{align}\label{R} \frac{1}{\sqrt{|c|}}\sum_{x=0}^{|c|-1} \exp\left(\frac{\pi i(ax^2+bx)}{c}\right) = \frac{e^{i\pi \sgn(ac)/4}}{\sqrt{|a|}} \exp( \frac{- \pi ib^2}{4ac}) \sum_{x=0}^{|a|-1} \exp\left(\frac{-\pi i(cx^2+bx)}{a}\right).\end{align}
(e.g. \cite[Theorem 1.2.2]{Ber98})

Setting $t=a/c$, $s=b/2c$, it can be stated as:
\begin{align}\label{Ttrans}T(\frac{s}{t},\frac{-1}{t})=\alpha(s,t)T(s,t) ,\ \ \ \alpha(s,t)=\sqrt{-it} \exp(\frac{s^2\pi i}{t}),\end{align}
where
\begin{align}\label{Tdef} T(s,t) = \frac{1}{2}\sum_{x=0}^{2\delta_t-1} \exp(\pi i x^2 t + 2\pi i x s),\ \ \ s\in \QQ,\ t\in \QQ,\end{align}
and $\delta_t>0$ is the denominator of $t$ in reduced form. We observe that the Jacobi theta function 
$$ \vartheta(z,\tau)=\sum_{x=-\infty}^{\infty} \exp(\pi i x^2\tau + 2\pi i xz),\ \ \ \tau\in \frakH,\ z\in \CC,$$
satisfies exactly the same transformation formula:
\begin{align}\label{thtr} \vartheta(\frac{z}{\tau},\frac{-1}{\tau}) =\alpha(z,\tau)
\vartheta(z,\tau),\ \ \ \alpha(z,\tau)=\sqrt{-i\tau} \exp(\frac{z^2\pi i}{\tau}).\end{align}

In the first part of this article, we investigate the degree to which $T(s,t)$ behaves like a theta function. We start with some basic properties of the function
$$u: \QQ \rar \CC,\ \  u(\frac{a}{b}) = \frac{1}{b}T(0,\frac{a}{b}),\ \ \ \gcd(a,b)=1,$$
which transforms like a weight $-\frac{1}{2}$ modular form. The first main result is as follows.

\textbf{Theorem A.} {\it For $n>0$ even, let
$$ Z_{n}(s) = \sum_{n=d_1 d_2} u(\frac{d_2}{d_1}) \frac{d_1^s}{\sqrt{\gcd(d_1,d_2)}},$$
and for each prime $p$ dividing $n$, put
\begin{align} Z_{n,p}(s) = \sum_{n=p^k m} u(\frac{m}{p^{k}}) \frac{p^{ks}}{\sqrt{\gcd(p^k,m)}}.
\end{align}
Then:
	$$\begin{array}{llc}(a) & L_n(1-s) = e^{\pi i/4} \cdot \ol{L_n(\ol{s})},\ \ \text{where }L_n(s) = n^{-s/2} Z_n(s), \\
	(b) & Z_n(s) = \prod_{p|n\text{ prime }} Z_{n,p}(s),\\
	(c) & Z_n(s)=0\text{ only if }\Re(s)=\frac{1}{2}.
	\end{array}$$}

The proof is elementary. Part $(c)$ amounts to the fact that the quadratic Gauss sum modulo $n$ has absolute value $\sqrt{n}$, which is as expected.

The second part is concerned with generalizing the reciprocity law (\ref{R}). For any $\frakt\in \Sym_n(\QQ)$ we can write $\frakt = UDV^{-1}$, where $U$, $V \in \GL_n(\ZZ)$ and 
$$D=PQ^{-1},\ \ \ P=\diag(p_1,\cdots, p_n),\ \ Q=\diag(q_1,\cdots, q_n),\ \ p_i,\ q_i\in \ZZ\ \ \gcd(p_i,q_i)=1.$$
We say $\frakt$ is in \textit{reduced form} when written $\frakt=AB^{-1}$, for $A = UP$, $B=VQ$, and $U$, $V$, $P$, $Q$ as above.

\textbf{Theorem B.} {\it Let $\frakt=AB^{-1}\in \Sym_n(\QQ)$ be in reduced form and non-singular. Let $\fraks\in \frac{1}{2}\ZZ^n$ be such that ${}^tB A+2\diag({}^t B\fraks)$ is integral and even. Then
	$$ \frac{1}{\sqrt{|\det(B)|}}\sum_{{\bf x}\in \ZZ^n(\mod B)} e(\frac{1}{2}{}^t {\bf x} \frakt {\bf x} + {}^t {\bf x} \fraks)= \frac{e^{\pi i\sigma_{\frakt}/4}}{\sqrt{|\det(A)|}}\exp(-\pi i {}^t\fraks \frakt^{-1} \fraks) \sum_{{\bf x}\in \ZZ^n(\mod A)} e(-\frac{1}{2}{}^t{\bf x} \frakt^{-1} {\bf x} + {}^t{\bf x} \fraks).$$}

Here $e(x)=\exp(2\pi i x)$ and $\sigma_{\frakt}$ is the signature of $\frakt$. The case $n=1$ is then (\ref{R}). The proof adapts the classical analytic argument using the Jacobi theta function to one using the Riemann theta function on the Siegel upper half-space of degree $n$.

\subsection*{Notation}

We shall use the number theorist's exponential throughout:
\begin{align} e(x) = e^{2\pi i x}.\end{align}
We also fix the symbol
\begin{align} w = e^{\pi i/4}.\end{align}
For $x>0$, $\sqrt{-x}$ will always mean $i\sqrt{|x|}$.

\section{Gauss Sums as Finite Theta Functions}

Let $n>0$ be an even integer. We first we show how the reciprocity law for generalized quadratic Gauss sums amounts to the Poisson summation formula on $\ZZ/n\ZZ$, plus the determination of the sign of the quadratic Gauss sum modulo $n$. Next, we define a finite zeta function $Z_n(s)$ as a certain transform of the Gauss sum modulo $n$, and show it has properties similar to global zeta functions.

\subsection{Poisson Summation on Cyclic Groups}

For $n>0$, we put
\begin{align} e_n(x) = e(\frac{x}{n})=e^{\frac{2\pi ix}{n}}.\end{align}

Let $G_n = \ZZ/n\ZZ$. We fix the Fourier transform of a function $f: G_n \rar \CC$ as
$$\calF_n f (x) = \sum_{y\in G_n} f(y)e_n(-xy),$$
and also denote it by $\wh{f}$, when $n$ is understood. The Fourier inversion formula is 
\begin{align} \label{Fif} \calF_n (\calF_n f) (x) = n\cdot f(-x).\end{align}

For $a|n$, let $H_a\subset G_n$ be the subgroup of order $a$. If $n=ab$, the finite version of the Poisson summation formula is:
\begin{align}\label{PSF} \frac{1}{\sqrt{|H_a|}} \sum_{x\in H_a} f(x) = \frac{1}{\sqrt{|H_b|}} \sum_{x\in H_b} \wh{f} (x).\end{align}
Both (\ref{PSF}) and (\ref{Fif}) are easy to verify. 

For $a,b,c\in \ZZ$, $ac\neq 0$, put
$$ f_{a,b,c}(x) = e(\frac{ax^2+bx}{2c}).$$
Then $f_{a,b,c}(x+c) = f_{a,b,c}(x)(-1)^{ac+b}$, so that $f_{a,b,c}: G_c \rar \CC$ is well-defined if and only if $ac+b$ is even. In that case we put
\begin{align}S_{a,b,c} = \frac{1}{\sqrt{|c|}} \sum_{x\in G_c} f_{a,b,c}(x).\end{align}

For $n>0$, and $m\in \ZZ$ such that $mn$ is even, put
	$$ C(m,n)=\left\{\begin{array}{lc} S_{1,0,n} &m\text{ even,}\\ \\S_{1,1,n}\cdot e(\frac{1}{8n})&m\text{ odd.}\end{array}\right.$$

\begin{lemma}\label{whf}For $n>0$, $m\in \ZZ$ such that $mn$ is even,
	$$ \wh{f}_{1,m,n}(x) =C(m,n) e(\frac{-m^2}{8n}) f_{-1,m,n}(x).$$ 
\end{lemma}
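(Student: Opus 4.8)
The plan is to evaluate $\wh{f}_{1,m,n}$ straight from the definition of $\calF_n$ and to recognize the answer as a shifted Gauss sum. First I would merge the two exponentials over the common denominator $2n$:
\[ \wh{f}_{1,m,n}(x) = \sum_{y\in G_n} e\Big(\tfrac{y^2+my}{2n}\Big)\,e_n(-xy) = \sum_{y\in G_n} e\Big(\tfrac{y^2+(m-2x)y}{2n}\Big). \]

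The key algebraic move is then to complete the square in $y$ centered at $y=x$, via the identity $y^2+(m-2x)y=(y-x)^2+m(y-x)+(mx-x^2)$. The constant term $mx-x^2$ factors out of the sum as precisely $f_{-1,m,n}(x)=e(\tfrac{mx-x^2}{2n})$, while the remaining summand is $f_{1,m,n}(y-x)$. Because the parity hypothesis guarantees that $f_{1,m,n}$ descends to a genuine function on $G_n$, I can invoke translation invariance of the sum over the group, replacing $y-x$ by $z$; this kills the dependence on $x$ and leaves the full Gauss sum $\sum_{z\in G_n} f_{1,m,n}(z)$, i.e. a fixed multiple of $S_{1,m,n}$. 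At this stage $\wh{f}_{1,m,n}(x)$ already has the predicted shape: a constant built from $S_{1,m,n}$ times $f_{-1,m,n}(x)$.

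It then remains to rewrite the linear Gauss sum $S_{1,m,n}$ in terms of the two standard sums entering $C(m,n)$. Completing the square a second time, $x^2+mx=(x+\tfrac m2)^2-\tfrac{m^2}{4}$, peels off the phase $e(-\tfrac{m^2}{8n})$ demanded by the statement. When $m$ is even the center $\tfrac m2$ is an integer, so a change of variable identifies the surviving sum with $S_{1,0,n}$; when $m$ is odd I would instead translate by $\tfrac{m-1}{2}\in\ZZ$ to normalize the linear coefficient to $1$, which yields $S_{1,1,n}$ together with the extra phase $e(\tfrac1{8n})$. These two outcomes are exactly what the definition of $C(m,n)$ records, and collecting the phases produces the claimed identity.

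The hard part will be the odd case: there the naive completed square is centered at the half-integer $m/2$, so one cannot simply shift to reach $S_{1,0,n}$, and the honest reduction must pass through $S_{1,1,n}$ while carrying the correction factor $e(\tfrac1{8n})$. Throughout I would keep careful track of parities, since $f_{1,0,n}$ descends only for $n$ even and $f_{1,1,n}$ only for $n$ odd; the same parity hypothesis that makes $f_{1,m,n}$ well-defined on $G_n$ is precisely what selects the legitimate standard sum in each branch.
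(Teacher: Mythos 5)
Your argument is correct and is essentially the paper's own proof: both complete the square in $y$ and translate within $G_n$ to peel off the phase $f_{-1,m,n}(x)\,e(-\tfrac{m^2}{8n})$ and land on the standard sums $S_{1,0,n}$ or $S_{1,1,n}$; you merely factor the computation into two stages (a uniform completion centered at $x$ producing $S_{1,m,n}$, then a second completion normalizing $S_{1,m,n}$), where the paper instead runs the two parity cases in a single pass each. The only caveat, which the paper's own statement shares, is that the descent of $f_{1,m,n}$ to $G_n$ (and of $f_{1,1,n}$, needed to make $S_{1,1,n}$ meaningful) requires $m+n$ even rather than the stated $mn$ even, which is indeed the condition under which the lemma is later applied.
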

\begin{proof}
	If $m=2l$,
	\begin{align*}
	\wh{f}_{1,m,n}(x) &= \sum_{y\in G_n} e_{2n}((y^2+2y(l-x)) = \left(\sum_{y\in G_n} e_{2n}((y+l-x)^2)\right)e_{2n}(-(l-x)^2)\\
	&=(S_{1,0,n})e_{2n}(-x^2+2lx) e_{2n}(-l^2) = (S_{1,0,n}) f_{-1,m,n}(x) e(\frac{-m^2}{8n}).
	\end{align*}
	If $m=2l+1$,
	\begin{align*}
	\wh{f}_{1,m,n}(x) &= \sum_{y\in G_n} e_{2n}( y^2+2(l-x)y+y) = \left(\sum_{y\in G_n} e_{2n}((y+(l-x))^2+(y+l-x))\right)e_{2n}(-(l-x)^2 - (l-x))\\
	&=(S_{1,1,n})e_{2n}(-x^2+2lx+x) e_{2n}(-l^2-l) =(S_{1,1,n})f_{-1,m,n}(x) e(\frac{-m^2+1}{8n}).
	\end{align*}
\end{proof}
\begin{prop}Let $a,b,c\in \ZZ$, with $ac\neq 0$, and $ac+b$ even. Then
	$$S_{a,b,c} =  C(b,ac) e(-\frac{b^2}{8ac}) S_{-c,b,a}.$$
\end{prop}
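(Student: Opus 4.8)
The plan is to realize the identity as a single instance of the finite Poisson summation formula (\ref{PSF}), applied to the one‑variable Gauss kernel $f_{1,b,ac}$ on the large cyclic group $G_{ac}$, using the two complementary subgroups $H_c$ and $H_a$ of orders $|c|$ and $|a|$. The guiding observation is that $S_{a,b,c}$ and $S_{-c,b,a}$ arise, respectively, as restrictions of $f_{1,b,ac}$ and of its Fourier transform to these subgroups, while Lemma \ref{whf} supplies precisely the Fourier transform needed to interpolate between them.

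Concretely, suppose first $ac>0$ and set $f=f_{1,b,ac}$ on $G_{ac}$. The order‑$c$ subgroup $H_c$ is generated by $a$, and a direct substitution in the exponent gives $f(ka)=f_{a,b,c}(k)$; as $k$ runs over $0,\dots,c-1$ this traverses $G_c$, so $\sum_{x\in H_c}f(x)=\sqrt{|c|}\,S_{a,b,c}$. On the spectral side, Lemma \ref{whf} yields $\wh f=C(b,ac)\,e(-\tfrac{b^2}{8ac})\,f_{-1,b,ac}$. The order‑$a$ subgroup $H_a$ is generated by $c$, and the analogous substitution gives $f_{-1,b,ac}(kc)=f_{-c,b,a}(k)$, whence $\sum_{x\in H_a}\wh f(x)=C(b,ac)\,e(-\tfrac{b^2}{8ac})\,\sqrt{|a|}\,S_{-c,b,a}$. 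Inserting these two evaluations into (\ref{PSF}) for the subgroups $H_c$ and $H_a$ of $G_{ac}$ cancels the factors $\sqrt{|c|}$ and $\sqrt{|a|}$ and produces the asserted formula. Thus the algebraic core of the reciprocity law is nothing more than these two restriction computations together with the Fourier transform already recorded in Lemma \ref{whf}.

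The one genuinely delicate point is the sign bookkeeping when $ac<0$, since the definition of $C(m,n)$ demands $n>0$ and the group $G_{ac}$ is only meaningful for the modulus $|ac|$. The plan here is to run the same argument on $G_{|ac|}$ and to track the discrepancies between $f_{a,b,c}$ and $f_{|a|,b,|c|}$ that the substitutions now introduce, reconciling the resulting square roots of negative quantities by means of the convention $\sqrt{-x}=i\sqrt{|x|}$ fixed in the notation. As an alternative route, the negative cases can be folded into the positive one by complex conjugation, using $\overline{S_{a,b,c}}=S_{-a,-b,c}$ (which sends $ac$ to $-ac$) together with the behaviour of $C(b,ac)$ and $e(-\tfrac{b^2}{8ac})$ under conjugation. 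I expect this sign analysis, rather than the Poisson step itself, to be the main obstacle, since the substance of the identity is already captured by the $ac>0$ computation above.
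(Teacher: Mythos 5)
Your proposal is exactly the paper's proof: both realize $S_{a,b,c}$ and $S_{-c,b,a}$ as the normalized sums of $f_{1,b,ac}$ over $H_c$ and of $f_{-1,b,ac}$ over $H_a$ inside $G_{ac}$, invoke Lemma \ref{whf} for the Fourier transform, and conclude by the finite Poisson summation formula (\ref{PSF}). Your additional attention to the case $ac<0$ (where $C(b,ac)$ and $G_{ac}$ need the modulus $|ac|$ and the convention $\sqrt{-x}=i\sqrt{|x|}$) is a point the paper silently elides, so it is a welcome refinement rather than a divergence.
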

\begin{proof}
Note that
$$f_{a,b,c}(x) = e\left( \frac{ (ax)^2 + (ax)b}{2ac}\right) = f_{1,b,ac}(ax).$$ 
Therefore
$$ S_{a,b,c} = \frac{1}{\sqrt{|c|}}\sum_{x=0}^{|c|-1} f_{1,b,ac}(ax) = \frac{1}{\sqrt{|H_c|}} \sum_{x\in H_c} f_{1,b,ac}(x),$$
and
$$S_{-c,b,a} = \frac{1}{\sqrt{|a|}}\sum_{x=0}^{|a|-1} f_{-1,b,ac}(cx) = \frac{1}{\sqrt{|H_a|}} \sum_{x\in H_a} f_{-1,b,ac}(x).$$
Since $\wh{f}_{1,b,ac}(x)=C(b,ac)e(-\frac{b^2}{8ac})f_{-1,b,ac}(x)$ by Lemma \ref{whf}, the Proposition follows from the Poisson summation formula \ref{PSF}.
\end{proof}
The reciprocity law (\ref{R}) can also be written as
\begin{align}\label{SR} S_{a,b,c} = e(\frac{\sgn(ac)}{8}) e(-\frac{b^2}{8ac}) S_{-c,b,a}.\end{align}
In view of the proposition, it's equivalent to the identity
$$ C(b,ac)=e(\frac{\sgn(ac)}{8}),\ \ \ ac\text{ even,}$$
or the pair of identities
$$ S_{1,0,n} = e(\frac{1}{8}),\ \ \ S_{1,1,n} = e(\frac{1}{8}-\frac{1}{8n}),$$
for $n>0$.

\subsection{A finite zeta function}

For $t,s\in \QQ$. For $t\neq 0$, let $\delta_t$ denote the denominator of $t$ in reduced form, recall from the introduction
\begin{align} T(s,t) = \frac{1}{2}\sum_{x=0}^{2\delta_t-1}\exp(\pi i tx^2 + 2\pi i sx).\end{align}
If $t=0$, we take $T(s,0)=0$, so that $T(s,t)=T(s,t+2)$ for all $s,t$. As mentioned in the introduction, the reciprocity law takes the same form
\begin{align}\label{Tid}
T(\frac{-1}{t},\frac{s}{t})=\alpha(s,t)T(s,t) =  ,\ \ \ \alpha(s,t) = \sqrt{-it} \exp(\pi i \frac{s^2}{t}).
\end{align}

For integers $p,q$, with $q>0$, we put
$$ U(p,q) = \frac{1}{q}T(0,\frac{p}{q})=\frac{1}{2q} \sum_{x=0}^{2q-1} e\left(\frac{px^2}{2q}\right).$$

\begin{lemma}\label{Ulem}With $p,q$ as above, \begin{itemize}	
		\item[(a)]$U(p,q)=0$ if $pq$ is odd.
		\item[(b)] $U(pr,qr)=U(p,q)$ for $r\neq 0$.
		\item[(c)]$U(pr^2,q)=U(p,q)$ if $pq$ is even and $\gcd(r,q)=1$.
	\end{itemize}
\end{lemma}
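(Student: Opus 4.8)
The plan is to work throughout with the concrete finite sum
$$\Sigma(p,q)\;=\;\sum_{x=0}^{2q-1} e\!\left(\frac{px^2}{2q}\right),\qquad U(p,q)=\frac{1}{2q}\,\Sigma(p,q),$$
and to note first that the summand $x\mapsto e(px^2/2q)$ descends to a well-defined function on $\ZZ/2q\ZZ$: since $(x+2q)^2-x^2=4qx+4q^2$, the exponent changes by the integer $2px+2pq$ under $x\mapsto x+2q$. Thus $\Sigma(p,q)$ is genuinely a sum over the group $\ZZ/2q\ZZ$, and I am free to reindex by any substitution I can track on that group. All three parts then become assertions about this group sum.

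For (a), with $p,q$ both odd, I use the fixed-point-free involution $x\mapsto x+q$ of $\ZZ/2q\ZZ$. Since $(x+q)^2-x^2=2qx+q^2$, the exponent changes by $px+\tfrac{pq}{2}$; as $e(px)=1$ while $e(pq/2)=(-1)^{pq}=-1$, the two members of each pair $\{x,x+q\}$ contribute opposite values, so summing in pairs gives $\Sigma(p,q)=0$ and hence $U(p,q)=0$. For (b) I take $r>0$ (the only case compatible with the standing convention $q>0$). Here the same periodicity is the whole point: as a function of $x$ the summand $e(px^2/2q)$ already has period $2q$, so summing it over the $2qr$ consecutive residues $0,\dots,2qr-1$ merely repeats one block $r$ times, giving $\Sigma(pr,qr)=r\,\Sigma(p,q)$; dividing by the normalization $2qr$ recovers $U(pr,qr)=U(p,q)$.

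Part (c) is where the real work lies. Writing $e(pr^2x^2/2q)=e\!\left(p(rx)^2/2q\right)$, I substitute $y=rx$ and analyze the map $x\mapsto rx$ on $\ZZ/2q\ZZ$. Because $\gcd(r,q)=1$, any common divisor of $r$ and $2q$ divides $2$, so $\gcd(r,2q)\in\{1,2\}$. If $r$ is odd the map is a bijection of $\ZZ/2q\ZZ$, and the substitution instantly yields $\Sigma(pr^2,q)=\Sigma(p,q)$. If $r$ is even then $q$ is odd, the map is two-to-one onto the even residues, and $\Sigma(pr^2,q)=2\sum_{y\ \mathrm{even}}e(py^2/2q)$. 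Here the hypothesis $pq$ even forces $p$ even, so writing $p=2p_0$ turns $e(py^2/2q)$ into $e(p_0y^2/q)$; the even residues $y=2z$ range over $z\in\ZZ/q\ZZ$, on which $z\mapsto 2z$ is a bijection (as $q$ is odd), and a short comparison shows both $\Sigma(pr^2,q)$ and $\Sigma(p,q)$ equal $2\sum_{w=0}^{q-1}e(p_0w^2/q)$.

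The main obstacle is precisely this even-$r$ case. When $\gcd(r,2q)=2$ the reindexing $y=rx$ is no longer a bijection of $\ZZ/2q\ZZ$, so the naive substitution loses a factor of $2$ and lands only on the even residues; recovering the identity requires using the parity hypothesis $pq$ even (to pass from modulus $2q$ to modulus $q$) together with a second substitution modulo the odd number $q$. Everything else is bookkeeping with the single recurring fact that the quadratic exponent is well-defined modulo $2q$.
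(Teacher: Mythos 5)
Your proof is correct, and for parts (a) and (b) it is essentially identical to the paper's: both rest on the single identity $e\bigl(p(x+q)^2/(2q)\bigr)=e\bigl(px^2/(2q)\bigr)(-1)^{pq}$, giving cancellation under $x\mapsto x+q$ for (a) and $2q$-periodicity for (b). For part (c) you use the same key substitution $x\mapsto rx$, but you carry it out on $\ZZ/2q\ZZ$, which forces a case split on the parity of $r$ and the two-to-one analysis when $\gcd(r,2q)=2$; the paper instead uses the hypothesis that $pq$ is even \emph{first}, to note that the summand is already well defined modulo $q$ and fold the sum down to $\frac{1}{q}\sum_{x\in\ZZ/q\ZZ}$, after which $\gcd(r,q)=1$ makes $x\mapsto rx$ a bijection with no cases at all. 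Your even-$r$ analysis is sound (in particular you correctly observe that $q$ odd forces $p$ even, and both sides reduce to $2\sum_{w\bmod q}e(p_0w^2/q)$), so the only difference is economy: reducing the modulus before substituting eliminates the entire second case.
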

\begin{proof}
	Both $(a)$ and $(b)$ follow from
	$$ e\left(\frac{p(x+q)^2}{q}\right)=e\left(\frac{px^2}{q}\right) (-1)^{pq}.$$
	Part $(c)$ follows from writing
	$$ U(p,q)=\frac{1}{q}\sum_{x\in \ZZ/q\ZZ} e\left(\frac{px^2}{q}\right),$$
	and making the substitution $x\rar rx$.	
\end{proof}
For $r=\frac{p}{q}$ a non-zero rational number, we put
\begin{align}\label{udef} u(r)=U(p,q),\end{align}
which is well-defined by part $(b)$ of the lemma. If $pq$ is odd, then $u(r)=0$ by $(a)$.

\begin{prop}
	\label{uprop1}The function $u:\QQ \rar \CC$ satisfies the following properties.
	\begin{itemize}	 
		\item[(a)] Suppose $a,b,c,d\in\ZZ$, with $b$, $d>0$, $\gcd(b,d)=1$, and $ab$, $cd$ even. Then
		$$ u(\frac{a}{b} + \frac{c}{d})=u(\frac{a}{b}) \cdot u(\frac{c}{d}).$$ 	
		\item[(b)] Suppose $pqr$ is non-zero and even, and $\gcd(q,r)=1$. Then
		$$ u(\frac{pq}{r}) u(\frac{pr}{q}) = u(\frac{p}{qr}).$$
	\end{itemize}
\end{prop}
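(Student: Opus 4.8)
The plan is to derive both identities from a single mechanism: the Chinese Remainder Theorem applied to the finite Gaussian sum that computes $u$. The starting point is that when $pq$ is even the summand $e(px^2/2q)$ is invariant under $x\mapsto x+q$, since $e\!\left(p(x+q)^2/2q\right)=e(px^2/2q)(-1)^{pq}$ (the periodicity underlying Lemma \ref{Ulem}). Hence
\[
u(p/q)=U(p,q)=\frac{1}{q}\sum_{x\in \ZZ/q\ZZ} e\!\left(\frac{px^2}{2q}\right)\qquad (pq\ \text{even}),
\]
is an honest sum over $\ZZ/q\ZZ$, while both sides vanish when $pq$ is odd (Lemma \ref{Ulem}(a)). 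I would use this normalized form throughout.

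For part (a), I would first record that the hypotheses $ab,cd$ even together with $\gcd(b,d)=1$ force $(ad+bc)(bd)$ to be even: if one of $b,d$ is even this is clear, and if both are odd then $a$ and $c$ are even, so $ad+bc$ is even. Thus $u\!\left((ad+bc)/(bd)\right)=U(ad+bc,bd)$ is given by a sum over $\ZZ/bd\ZZ$. The key algebraic identity is the splitting $\frac{(ad+bc)x^2}{2bd}=\frac{ax^2}{2b}+\frac{cx^2}{2d}$, so the summand factors as a product of $e(ax^2/2b)$, periodic modulo $b$, and $e(cx^2/2d)$, periodic modulo $d$. Running $x$ over $\ZZ/bd\ZZ$ through the diagonal bijection $\ZZ/bd\ZZ\isomto \ZZ/b\ZZ\times \ZZ/d\ZZ$, $x\mapsto(x\bmod b,\,x\bmod d)$, separates the sum into $\Bigl(\sum_{\ZZ/b\ZZ}e(ax^2/2b)\Bigr)\Bigl(\sum_{\ZZ/d\ZZ}e(cx^2/2d)\Bigr)$, which equals $bd\cdot U(a,b)U(c,d)$ after cancelling the factor $1/bd$.

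For part (b), after reducing to $q,r>0$ (absorbing signs into $p$, since only the positive denominators $|q|,|r|$ enter the definition of $u$), I would instead use the off-diagonal substitution $x=qy+rz$. Because $\gcd(q,r)=1$ this defines a bijection $\ZZ/r\ZZ\times\ZZ/q\ZZ\isomto\ZZ/qr\ZZ$: modulo $q$ it kills $qy$ and $z\mapsto rz$ is a bijection of $\ZZ/q\ZZ$, and symmetrically modulo $r$. Expanding, $px^2=pq^2y^2+2pqr\,yz+pr^2z^2$, so $\frac{px^2}{2qr}=\frac{pqy^2}{2r}+pyz+\frac{prz^2}{2q}$; the integer middle term $pyz$ disappears under $e(\cdot)$, while $e(pqy^2/2r)$ and $e(prz^2/2q)$ are periodic modulo $r$ and $q$ (both using $pqr$ even). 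Feeding this reindexing into $U(p,qr)=\frac1{qr}\sum_{x\in\ZZ/qr\ZZ}e(px^2/2qr)$ and separating variables yields $U(p,qr)=U(pq,r)\,U(pr,q)$, which is exactly $u(p/qr)=u(pq/r)\,u(pr/q)$.

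The only real subtlety, and the step I would watch most carefully, is that $u$ is a half-integral Gaussian sum, so every congruence must be tracked modulo $2qr$ rather than modulo $qr$. This is what makes the parity hypothesis ($pqr$ even in (b), the evenness conditions in (a)) indispensable: it is precisely what legitimizes the reduction from a sum over $\ZZ/2q\ZZ$ to one over $\ZZ/q\ZZ$, and what forces the cross term $2pqr\,yz$ in part (b) to vanish modulo $2qr$. Once the periodicity bookkeeping modulo $2qr$ is in place, the two CRT bijections carry no further obstruction and both identities drop out.
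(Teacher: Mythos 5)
Your proof is correct, and while part (a) runs on the same engine as the paper's, part (b) takes a genuinely different route. For (a), both arguments are Chinese Remainder Theorem factorizations of the sum over $\ZZ/bd\ZZ$; the difference is that the paper parametrizes $\ZZ/bd\ZZ$ by $z=xsd+yrb$ with $rb+sd=1$ and then verifies $(ad+bc)z^2\equiv adx^2+bcy^2\ (\mod 2bd)$ by a congruence computation, whereas you exploit the exact rational identity $\frac{(ad+bc)x^2}{2bd}=\frac{ax^2}{2b}+\frac{cx^2}{2d}$ and the periodicity of the two factors modulo $b$ and $d$ (which is precisely where $ab$, $cd$ even enter), then separate variables through the reduction map $x\mapsto(x\bmod b,\,x\bmod d)$; this sidesteps the mod-$2bd$ bookkeeping entirely and is arguably cleaner. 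For (b) the paper does not compute at all: it applies part (a) to get $u(\frac{pq}{r})u(\frac{pr}{q})=u(\frac{p(q^2+r^2)}{qr})$, rewrites $q^2+r^2=(q+r)^2-2qr$ and uses the period-$2$ property of $u$, and finally strips the square factor $(q+r)^2$ with the substitution invariance of Lemma \ref{Ulem}(c) (valid since $\gcd(q+r,qr)=1$). Your direct reindexing $x=qy+rz$ of the sum defining $U(p,qr)$, with the cross term $pyz$ dying inside $e(\cdot)$, proves the same identity from scratch; it is self-contained, independent of part (a), and makes the role of the hypothesis ``$pqr$ even'' completely transparent, at the cost of a second CRT computation where the paper gets away with reusing (a). Your two points of caution --- reducing to $q,r>0$ before invoking the definition of $U$, and justifying the passage from sums over $\ZZ/2Q\ZZ$ to sums over $\ZZ/Q\ZZ$ via the parity hypotheses --- are exactly the right places to be careful, and you handle both correctly.
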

\begin{proof}
	For $(a)$, we write
	$$ u(\frac{a}{b}+\frac{c}{d}) =\frac{1}{bd}\sum_{z\in \ZZ/bd\ZZ} e\left(\frac{(ad+bc)z^2}{2bd}\right).$$
	Fix $r$, $s$ integers such that $rb+sd=1$. Then each $z\in\ZZ/bd\ZZ$ is uniquely of the form $xsd+yrb$ for $(x,y)\in \ZZ/b\ZZ \times \ZZ/d\ZZ$. We have	$$ u(\frac{a}{b}+\frac{c}{d})=\frac{1}{bd}\sum_{x=0}^{b-1} \sum_{y=0}^{d-1} e\left(\frac{(ad+bc)(xsd+yrd)^2}{2bd}\right).$$
	Now we have
	$$ (xsd+yrb)^2 \equiv x^2s^2d^2 + y^2r^2b^2\ (\mod 2bd).$$
	It follows that
	\begin{align*} (ad+bc)(xsd+yrd)^2&\equiv (ad+bc)(x^2s^2d^2+y^2r^2b^2) \equiv x^2 as^2d^3 + y^2cr^2b^3 + bd(ab y^2 r^2 + cd x^2s^2)\\
	&\equiv x^2as^2d^3 + y^2cr^2 b^3\ (\mod 2bd),
	\end{align*}
	using the fact that $ab$ and $cd$ are even.	On the other hand from $rb+sd=1$, we have $r^2b^2 + s^2d^2 \equiv 1\ (\mod 2bd)$. From this and the above we obtain
	\begin{align*} (ad+bc)(xsd+yrd)^2 &\equiv ad x^2 (1 - r^2b^2) + bcy^2(1-s^2d^2)\equiv adx^2 + bcy^2 - bd (abr^2 + cds^2)\\
	&\equiv adx^2 + bcy^2\ (\mod 2bd),\end{align*}
	again using that $ad$, $bc$ are even.  Therefore
	$$ u(\frac{a}{b}+\frac{c}{d}) = \frac{1}{bd} \sum_{x=0}^{b-1} \sum_{y=0}^{d-1} e\left(\frac{adx^2+bcy^2}{2bd}\right) = u(\frac{a}{b})u(\frac{c}{d})$$
	proving $(a)$. 
	
	For $(b)$, first note that if $p$, $q$, $r$ are all odd, both sides are zero by Lemma \ref{Ulem}(a). If $pqr$ is even, we have
	$$ u(\frac{pq}{r})u(\frac{pr}{q})=u(\frac{p(q^2+r^2)}{rq})=u(\frac{p(q+r)^2}{rq}-2)=u(\frac{p(q+r)^2}{rq})$$
	using $(a)$, and the fact that $u$ has period $2$. Since $\gcd(rq,q+r)=1$, we have
	$$ u(\frac{p(q+r)^2}{rq})=u(\frac{p}{qr})$$
	by Lemma \ref{Ulem}(b), as desired.
\end{proof}

Form this point on until the end of the first section, we assume the general determination of the sign of the Gauss sum, which is the following statement:

Let 
$$ g(a,n) = \sum_{x=0}^{n-1} e(\frac{ax^2}{n}).$$
For $a>0$ odd, put
$$ \epsilon_a = \left\{\begin{array}{cc}1&a\equiv 1(\mod 4)\\
i& a\equiv 3(\mod 4).\end{array}\right.$$
If $a,n>0$ and $\gcd(a,n)=1$, then
\begin{align}\label{SGS} \frac{1}{\sqrt{n}} g(a,n) = \left\{\begin{array}{rr}\legendre{a}{n}& n\equiv 1(\mod 4),\\
0& n\equiv 2(\mod 4),\\
\legendre{a}{n}i&n\equiv 3(\mod 4),\\
\legendre{a}{n}(1+i^a)& n\equiv 0(\mod 4).\end{array}\right.
\end{align}
See \cite[Theorems 1.5.1,2,4]{Ber98}. The case $n=1,3$ is sometime stated together as 
$$g(a,n) = \legendre{a}{n} \epsilon_a \sqrt{n}.$$

\begin{prop}\label{uprop2}The function $u$ satisfies the following properties.
	\begin{itemize}	
		\item[(a)] For $p,q$ arbitrary integers, with $q\neq 0$, we have
		$$ u(\frac{p}{q}) = \sqrt{\frac{p}{q}}\cdot e^{\pi i/4} \cdot u(\frac{-q}{p}).$$ 
		
		\item[(b)] Suppose $p$, $q$ are coprime, $pq$ is even, and $q>0$. Then
		$$ u(\frac{p}{q}) = \left\{\begin{array}{lc}\frac{w^p}{\sqrt{q}} \legendre{q}{p}& \text{ if } p \text{ odd,}\\ \\\
		\frac{w^{1-q}}{\sqrt{q}} \legendre{p}{q}& \text{ if }p \text{ even.}\end{array}\right.$$
		where $w = e^{\pi i/4}$, and $\legendre{p}{q}$ is the Jacobi symbol.
	\end{itemize}
\end{prop}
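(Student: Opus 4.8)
The plan is to read off part $(a)$ from the reciprocity law at $s=0$ and then to prove part $(b)$ in two stages, deducing the odd case from the even case through $(a)$. Since $u(p/q)$ depends only on the ratio $p/q$, and since $u(-r)=\overline{u(r)}$ is immediate from $U(-p,q)=\overline{U(p,q)}$, I may assume $p>0$ throughout $(b)$ and recover $p<0$ at the end by conjugation; for $(a)$ I take $p/q>0$, the case of general ratio requiring only that one track the branch of the square root dictated by (\ref{Tid}).

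For $(a)$ I specialize (\ref{Tid}) to $s=0$, which reads $T(0,-1/t)=\sqrt{-it}\,T(0,t)$. Taking $t=p/q$ with $\gcd(p,q)=1$ and $p,q>0$, and using $u(p/q)=\tfrac1q T(0,p/q)$ together with $\delta_{-q/p}=p$, so that $u(-q/p)=\tfrac1p T(0,-q/p)$, the identity rearranges to $u(p/q)=\tfrac pq(-ip/q)^{-1/2}u(-q/p)$. Under the convention $\sqrt{-x}=i\sqrt{|x|}$ one has $(-ip/q)^{-1/2}=\sqrt{q/p}\,e^{\pi i/4}$, and the prefactor collapses to $\sqrt{p/q}\,e^{\pi i/4}$, which is the assertion.

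For $(b)$ I first take $p$ even, so $q$ is odd. Writing $p=2p_0$ and using that $e(px^2/2q)$ has period $q$ when $pq$ is even, the defining sum folds in half to $u(p/q)=\tfrac1q\sum_{x\bmod q}e(p_0x^2/q)=\tfrac1q\,g(p_0,q)$ with $\gcd(p_0,q)=1$. The odd-modulus case of (\ref{SGS}), in the combined form $g(p_0,q)=\legendre{p_0}{q}\epsilon_q\sqrt q$, then gives $u(p/q)=\tfrac1{\sqrt q}\legendre{p_0}{q}\epsilon_q$, and I reshape this using the numerator-multiplicativity $\legendre{p_0}{q}=\legendre{2}{q}\legendre{p}{q}$ together with $\legendre{2}{q}\,\epsilon_q=w^{1-q}$, an elementary check on $q\bmod 8$ via the supplementary law for $\legendre{2}{q}$; this yields $u(p/q)=\tfrac{w^{1-q}}{\sqrt q}\legendre{p}{q}$. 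For $p$ odd (hence $q$ even) I avoid a second Gauss sum evaluation: by $(a)$, $u(p/q)=\sqrt{p/q}\,w\,u(-q/p)$, while $u(-q/p)=\overline{u(q/p)}$ and $q/p$ falls under the even case just proved, giving $u(q/p)=\tfrac{w^{1-p}}{\sqrt p}\legendre{q}{p}$. Conjugating and substituting, the roots of unity combine as $w\cdot w^{-(1-p)}=w^{p}$ and the radical as $\sqrt{p/q}/\sqrt{p}=1/\sqrt{q}$, producing $u(p/q)=\tfrac{w^{p}}{\sqrt q}\legendre{q}{p}$. Finally $p<0$ in either parity follows by conjugating the corresponding $p>0$ identity, where one checks the resulting supplementary factor matches using $\legendre{-1}{q}=i^{1-q}$ for odd $q$ and the fact that $q>0$ in the Kronecker symbol.

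The obstacle I anticipate is purely one of bookkeeping: holding the branch of the square root in $(a)$ consistent with the convention $\sqrt{-x}=i\sqrt{|x|}$ (it is exactly here that the sign for $p/q<0$ must be handled with care), and reconciling the supplementary factors $\legendre{2}{q}$, $\epsilon_q$, $\legendre{-1}{q}$ against powers of the eighth root of unity $w$. Each such reconciliation reduces to a congruence modulo $8$, so no genuine difficulty arises, only the consistent treatment of the even-denominator Kronecker symbols implicit in (\ref{SGS}). If one prefers a self-contained treatment of the odd case, one may instead use $u(p/q)=\tfrac1{2q}g(p,2q)$ with $2q\equiv0\pmod4$ and the last line of (\ref{SGS}); but then $\legendre{p}{2q}$ must be resolved by full quadratic reciprocity, which is appreciably more cumbersome than the passage through $(a)$.
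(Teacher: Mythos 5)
Your proof is correct and follows essentially the same route as the paper: part (a) is read off from the reciprocity law, and part (b) is obtained by evaluating one parity directly from (\ref{SGS}) and transferring to the other parity through (a) together with complex conjugation. The only (harmless) divergence is that you treat the even-$p$/odd-$q$ case directly via the odd-modulus evaluation $g(p_0,q)=\legendre{p_0}{q}\epsilon_q\sqrt{q}$ and deduce the odd-$p$ case from it, whereas the paper computes the odd-$p$ case first as $\frac{1}{4q_0}g(p,4q_0)$ using the $n\equiv 0\pmod{4}$ branch of (\ref{SGS}); your ordering invokes the simpler case of the Gauss-sum evaluation and correspondingly lighter bookkeeping with the supplementary symbols.
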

\begin{proof}$(a)$ follows from reciprocity. For $(b)$, first assume $q=2q_0$ is even. We have $$u(\frac{p}{q})=\frac{1}{4q_0}g(p,4q_0)=\frac{w}{4q_0} \epsilon_{p}^{-1} \sqrt{4q_0} \legendre{4q_0}{p} = \frac{w^p}{\sqrt{2q_0}}\legendre{2q_0}{p},$$
	using (\ref{SGS}), and the observation that
	$$ w^{p-1}\epsilon_p = (-1)^{\frac{p^2-1}{8}}=\legendre{2}{p}.$$
	If $q$ is odd, $p$ is even, and the statement then follows from $(a)$ and the even case applied to $u(\frac{q}{p})$. This proves $(b)$.
\end{proof}

\begin{cor}\label{upmcor}Let $p$ be a prime, $m$ a positive integer such that $mp$ is even and $p\nmid m$. Let $r>0$. Then
	$$ u(\frac{m}{p^r})= \frac{\epsilon(m,p,r)}{\sqrt{p^r}},$$
	where 
	$$\epsilon(m,p,r) =\left\{\begin{array}{cc} w^{1-p} \legendre{m}{p}&p \text{ odd,}\ r\text{ odd,}\\\\
	1&p\text{ odd},\ r\text{ even,}\\\\
	w^m\legendre{2}{m}&p=2,\ r\text{ odd},\\\\
	w^m& p=2,\ r\text{ even}.
	\end{array}\right.$$
\end{cor}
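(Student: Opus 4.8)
The plan is to deduce everything from Proposition \ref{uprop2}(b) by a case analysis on the parity of the numerator $m$, which turns out to be forced by the hypotheses. First I would record that the proposition applies: since $p$ is prime and $p\nmid m$ we have $\gcd(m,p^r)=1$, and since $r\geq 1$ the condition that $mp$ be even is equivalent to $mp^r$ being even. Thus Proposition \ref{uprop2}(b) applies to $u(\frac{m}{p^r})$, read with numerator $m$ and denominator $q=p^r>0$. The key observation is that the dichotomy ``$m$ even'' versus ``$m$ odd'' in that proposition matches exactly the dichotomy ``$p$ odd'' versus ``$p=2$'' in the corollary: if $p$ is odd then $mp$ even forces $m$ even, while if $p=2$ then $p\nmid m$ forces $m$ odd.

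In the case $p$ odd (so $m$ even), the second clause of Proposition \ref{uprop2}(b) gives
$$ u(\frac{m}{p^r}) = \frac{w^{1-p^r}}{\sqrt{p^r}}\legendre{m}{p^r}. $$
I would then simplify the two nontrivial factors separately. Multiplicativity of the Jacobi symbol in the denominator gives $\legendre{m}{p^r}=\legendre{m}{p}^r$, which equals $1$ for $r$ even and $\legendre{m}{p}$ for $r$ odd. For the power of $w$ I would use $p^2\equiv 1\ (\mathrm{mod}\ 8)$, so that $p^r\equiv 1\ (\mathrm{mod}\ 8)$ for $r$ even and $p^r\equiv p\ (\mathrm{mod}\ 8)$ for $r$ odd; since $w^8=1$, the exponent $1-p^r$ may be replaced by $0$ or $1-p$ accordingly. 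Combining, $u(\frac{m}{p^r})$ equals $\frac{1}{\sqrt{p^r}}$ for $r$ even and $\frac{w^{1-p}}{\sqrt{p^r}}\legendre{m}{p}$ for $r$ odd, which are the two odd-prime values of $\epsilon(m,p,r)$.

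In the case $p=2$ (so $m$ odd), the first clause of Proposition \ref{uprop2}(b) gives
$$ u(\frac{m}{2^r}) = \frac{w^m}{\sqrt{2^r}}\legendre{2^r}{m}. $$
Here multiplicativity of the Jacobi symbol in the numerator gives $\legendre{2^r}{m}=\legendre{2}{m}^r$, and since $m$ is odd this symbol is $\pm 1$, so it equals $1$ for $r$ even and $\legendre{2}{m}$ for $r$ odd. This yields $\frac{w^m}{\sqrt{2^r}}$ and $\frac{w^m}{\sqrt{2^r}}\legendre{2}{m}$ respectively, matching the two $p=2$ values of $\epsilon(m,p,r)$.

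There is no serious obstacle here; the proof is essentially bookkeeping on top of Proposition \ref{uprop2}(b). The only step requiring a little care is the reduction of $w^{1-p^r}$ in the odd-prime case, where one invokes $w^8=1$ together with $p^r\equiv p\ (\mathrm{mod}\ 8)$ (for odd $r$) to make the exponent of $w$ independent of $r$; this is the sole place a nontrivial congruence enters, and it is precisely what decouples the answer from the exponent $r$.
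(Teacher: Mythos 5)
Your proof is correct, but it takes a different route from the paper's. The paper first applies the reciprocity relation of Proposition \ref{uprop2}(a) to trade $u(\frac{m}{p^r})$ for $\ol{u(\frac{p^r}{m})}$, then uses the substitution invariance of Lemma \ref{Ulem}(c) to strip the square part of $p^r$ (reducing to $u(\frac{p}{m})$ or $u(\frac{1}{m})$ according to the parity of $r$), and only then evaluates via Proposition \ref{uprop2}(b). You instead apply Proposition \ref{uprop2}(b) directly to $u(\frac{m}{p^r})$, reading it with numerator $m$ and denominator $q=p^r$, and absorb the $r$-dependence into two elementary facts: complete multiplicativity of the Jacobi symbol in whichever argument carries the power, and $p^2\equiv 1\ (\mathrm{mod}\ 8)$ so that $w^{1-p^r}$ collapses to $1$ or $w^{1-p}$. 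Your observation that the hypotheses force the parity dichotomy of the numerator to coincide with the dichotomy $p$ odd versus $p=2$ is exactly right, and all the hypotheses of Proposition \ref{uprop2}(b) ($\gcd(m,p^r)=1$, $mp^r$ even, $p^r>0$) are verified. What each approach buys: yours is shorter and stays entirely inside one clause of one proposition, avoiding the complex conjugation and the extra appeal to Lemma \ref{Ulem}(c); the paper's route keeps the denominator of the evaluated symbol equal to the small modulus $m$ throughout, which is in the spirit of how the corollary is later consumed, but at the cost of one more functional-equation step. Both are complete proofs.
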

\begin{proof}By part $(a)$ we have
	$$ \sqrt{p^r} u(\frac{m}{p^r}) = \sqrt{m} \cdot w \cdot \ol{u(\frac{p^r}{m})}.$$
	If $p$ is odd, by part $(c)$ we have either $u(\frac{p^r}{m})=u(\frac{p}{m})$ or $u(\frac{1}{m})$, according to whether $r$ is odd or even. The latter case is $\frac{w}{\sqrt{m}}$ by $(a)$, and the former $\frac{w^p}{\sqrt{m}}\legendre{m}{p}$ by $(d)$ and the claim follows either way. If $p=2$ and $r$ is odd, $u(\frac{p^r}{m})=u(\frac{2}{m})=\frac{w^{1-m}}{\sqrt{m}} \legendre{2}{m}$ again by $(d)$, and if $r$ is even, $u(\frac{p^r}{m})=u(\frac{4}{m})=\frac{w^{1-m}}{\sqrt{m}}\legendre{4}{m} = \frac{w^{1-m}}{\sqrt{m}}.$ Again in either case, the claim follows.

\end{proof}

For $r\in \QQ^\times$, the function $u$ satisfies the functional equations
$$ u(\frac{-1}{r}) = \frac{u(r)}{\sqrt{r}},\ \ \ u(r+2)=u(r).$$
In other words, $u$ is a function defined on $\QQ^\times=P^1(\QQ)-\{0,\infty\}$ that transforms the same way as a modular form of weight $-1/2$ for the level subgroup $\Gamma_0(2)$.

For $n>0$ an even integer, and $s\in \CC$, let
\begin{align} Z_n(s) = \sum_{d|n} u(\frac{d'}{d})\frac{d^s}{\sqrt{\gcd(d',d)}},\ \ \ d'=\frac{n}{d}.\end{align}
For each prime divisor $p$ of $n$, let
\begin{align}\label{Znp} Z_{n,p}(s) = \sum_{k=0}^{\alpha_p} u(\frac{n}{p^{2k}}) p^{ks-\frac{\mu(k)}{2}},\ \ \ \mu(k)=\min(k,\alpha_p-k),\end{align}
where $\alpha_p$ is the exponent of $p$ in the factorization of $n$.

\begin{prop}\label{Znpfac}Let $p$ be a prime, and $n=p^{\alpha}\cdot m$ where $p\nmid m$. Writing $\beta=\floor{\frac{\alpha}{2}}$,
	$$(1-p^{s-\frac{1}{2}})Z_{n,p}(s) = \left\{\begin{array}{cl}
	(1-p^{(\beta+1)(s-\frac{1}{2})})(1+w^{1-p}\legendre{m}{p} p^{(\beta+1)(s-\frac{1}{2})})&p\text{ odd, }\alpha\text{ odd,}\\\\
	1-p^{(\alpha+1)(s-\frac{1}{2})}& p\text{ odd, }\alpha\text{ even,}\\\\ 
	(1+2^{(\beta+1)(s-\frac{1}{2})})(1+w^m \legendre{2}{m}2^{(\beta+1)(s-\frac{1}{2})}),&p=2,\ \alpha\text{ odd,}\\\\
	(1-2^{\beta(s-\frac{1}{2})})(1+w^m 2^{(\beta+1)(s-\frac{1}{2})}),&p=2,\ \alpha\text{ even.}
	\end{array}\right.$$
\end{prop}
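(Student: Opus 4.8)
The plan is to evaluate $Z_{n,p}(s)$ term by term using Corollary \ref{upmcor} together with the definition of $u$, observe that once the powers of $p$ produced by the values of $u$ are accounted for every summand collapses to a constant times $p^{k(s-1/2)}$, and then recognize two finite geometric series whose sum telescopes against the factor $(1-p^{s-1/2})$. Throughout I would write $y=p^{s-1/2}$ and $r=2k-\alpha$, and split the range $0\le k\le\alpha$ at $k=\beta=\floor{\alpha/2}$. A direct check gives $\mu(k)=k$ for $k\le\beta$ and $\mu(k)=\alpha-k$ for $k\ge\beta+1$.

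First I would evaluate $u(n/p^{2k})=u(p^{\alpha-2k}m)$ in each half. For $k\le\beta$ the argument $p^{\alpha-2k}m$ is a nonzero integer, and from the definition $u(N)=U(N,1)=\tfrac12(1+(-1)^N)$, which is $1$ when $N$ is even and $0$ otherwise (the latter also being Lemma \ref{Ulem}(a)). Since $n$ is even the value is $1$ throughout this range, except in the single case $p=2$, $\alpha$ even, $k=\beta$, where the argument is the odd integer $m$ and the term vanishes. For $k\ge\beta+1$ the argument is $m/p^{r}$ with $r=2k-\alpha>0$, and Corollary \ref{upmcor} gives $u(m/p^r)=\epsilon(m,p,r)\,p^{-r/2}$, where $\epsilon(m,p,r)$ depends only on the parities of $p$ and $r$ (and a Jacobi symbol), hence is constant across this range.

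The key step is that the powers of $p$ collapse uniformly. For $k\le\beta$ the summand is $p^{ks-k/2}=y^k$, while for $k\ge\beta+1$ the factor carried by $u$ combines with the explicit weight to give
$$-\frac{r}{2}+ks-\frac{\alpha-k}{2}=-\frac{2k-\alpha}{2}+ks-\frac{\alpha-k}{2}=k\Big(s-\tfrac12\Big),$$
so that summand equals $\epsilon(m,p,r)\,y^k$ as well. Thus $Z_{n,p}(s)$ is a sum of at most two finite geometric series in $y$ with constant coefficients, and multiplying by $(1-y)$ telescopes $\sum_{k=a}^{b}y^k$ into $y^a-y^{b+1}$. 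Assembling and factoring yields the stated products; for instance, when $p$ and $\alpha$ are both odd one obtains $(1-y^{\beta+1})+w^{1-p}\legendre{m}{p}\,y^{\beta+1}(1-y^{\beta+1})=(1-y^{\beta+1})(1+w^{1-p}\legendre{m}{p}\,y^{\beta+1})$, and the remaining three cases come out in the same way after substituting the corresponding $\epsilon$.

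The main obstacle is purely the bookkeeping at the boundary $k=\beta$: one must decide whether the central term lies in the integer range with value $1$, or survives/vanishes (the latter only for $p=2$, $\alpha$ even), and correspondingly whether the second geometric series begins at $y^{\beta}$ or $y^{\beta+1}$. This is exactly what distinguishes the four cases and produces the differing exponents $\beta$, $\beta+1$, and $\alpha+1$; reading off $\epsilon(m,p,r)$ from Corollary \ref{upmcor} in each parity class then fixes the constants $w^{1-p}\legendre{m}{p}$ and $w^{m}\legendre{2}{m}$. (I note in passing that carrying out the $p=2$, $\alpha$ odd case by this method produces the first factor $1-2^{(\beta+1)(s-1/2)}$, matching the sign pattern of the other three cases.)
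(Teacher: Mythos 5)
Your proposal is correct and follows essentially the same route as the paper's own proof: split the sum at $k=\beta$, use Corollary \ref{upmcor} and the vanishing of $u$ on odd integers, observe that every summand collapses to $\epsilon\cdot p^{k(s-1/2)}$, and telescope the geometric series against $(1-p^{s-1/2})$, with the only delicate point being the central term when $p=2$ and $\alpha$ is even. Your parenthetical about the $p=2$, $\alpha$ odd case is also consistent with the paper's own computation, which likewise produces a first factor $1-2^{(\beta+1)(s-1/2)}$ rather than the $1+2^{(\beta+1)(s-1/2)}$ appearing in the displayed statement.
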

\begin{proof} First, suppose we're not in the last case where $p=2$ and $\alpha$ is even. Then
	\begin{align*}Z_{n,p}(s) &= \sum_{k=0}^{\beta} u(p^{\alpha-2k}\cdot m) p^{ks-\frac{k}{s}} + \sum_{k=\beta+1}^\alpha u(\frac{m}{p^{2k-\alpha}})p^{ks-\frac{\alpha-k}{2}}\\
	&=\sum_{k=0}^{\beta} p^{k(s-\frac{1}{2})} + \sum_{k=\beta+1}^{\alpha} \epsilon(p,m,k) \cdot p^{-k+\frac{\alpha}{2} + ks - \frac{\alpha-k}{2}}\\
	&=(1-p^{s-\frac{1}{2}})^{-1}\left((1-p^{(\beta+1)(s-\frac{1}{2})}) + \epsilon(p,m,\alpha) (p^{(\beta+1)(s-\frac{1}{2})} - p^{(\alpha+1)(s-\frac{1}{2})})\right)\\,
	\end{align*}	
	using Corollary \ref{upmcor}, and the fact that $\epsilon(p,m,2k-\alpha)=\epsilon(p,m,\alpha)$. If $p$ is odd and $\alpha$ even, $\epsilon(p,m,\alpha)=1$, and the claim follows. If $\alpha$ is odd, $\alpha+1 = 2(\beta+1)$, so
	$$Z_{n,p}(s)=(1-p^{s-\frac{1}{2}})^{-1}(1-p^{(\beta+1)(s-\frac{1}{2})})(1+\epsilon_{p,m,k}p^{(\beta+1)(s-\frac{1}{2})})).$$
	If $p=2$ and $\alpha=2\beta$, the term $u(p^{\alpha-2k}\cdot m)$ vanishes for $k=\beta$, rather than equal $1$ as above. Then
	\begin{align*} Z_{n,2}(s) &= \sum_{k=0}^{\beta-1} 2^{k(s-\frac{1}{2})} + \sum_{k=\beta+1}^{\alpha} \epsilon_{2,m,\alpha} 2^{k(s-\frac{1}{2})} = (1-2^{s-\frac{1}{2}}) \left( (1-2^{\beta(s-\frac{1}{2})}) + w^m (2^{(\beta+1)(s-\frac{1}{2})} - 2^{(\alpha +1)(s-\frac{1}{2})})\right)\\
	&= (1-2^{s-\frac{1}{2}})(1-2^{\beta(s-\frac{1}{2})})(1+w^{m}2^{(\beta+1)(s-\frac{1}{2})}).
	\end{align*}
\end{proof}

\begin{thm}\label{thmA} The functions $Z_n(s)$ satisfy the following properties.
	$$\begin{array}{llc}(a) & L_n(1-s) = w\cdot \ol{L_n(\ol{s})},\ \ \text{where }L_n(s) = n^{-s/2} Z_n(s), \\
	(b) & Z_n(s) = \prod_{ p|n\text{ prime }} Z_{n,p}(s),\\
	(c) & Z_n(s)=0\text{ only if }\Re(s)=\frac{1}{2}.
	\end{array}$$
\end{thm}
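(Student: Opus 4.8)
The plan is to treat the three parts separately, proving the Euler product (b) first since the other two parts are read off from the local factors.

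For part (a) I would argue directly from the functional equation of $u$ and the symmetry of the defining sum under $d \leftrightarrow n/d$. Writing $d' = n/d$ we have $u(d'/d) = u(n/d^2)$, so $Z_n(s) = \sum_{d\mid n} u(n/d^2)\, d^s/\sqrt{\gcd(d,n/d)}$. Re-indexing the sum for $Z_n(1-s)$ by $e = n/d$ replaces $n/d^2$ by $e^2/n$, multiplies in a factor $n^{1-s}$, and leaves $\gcd(d,n/d)$ unchanged. Applying Proposition \ref{uprop2}(a) in the form $u(e^2/n) = (e/\sqrt{n})\,w\,u(-n/e^2)$ together with the conjugation identity $u(-r) = \overline{u(r)}$ (immediate from the defining exponential sum) then turns $L_n(1-s) = n^{-(1-s)/2} Z_n(1-s)$ into $w\, n^{-s/2} \sum_e \overline{u(n/e^2)}\, e^s/\sqrt{\gcd(e,n/e)}$, which is exactly $w\,\overline{L_n(\bar s)}$. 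This is a short self-contained computation that does not use (b).

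For part (b) I would first reduce to a pointwise statement. Since $d^s$, $\gcd(d,n/d)$, and the index set $\{d\mid n\}$ all factor over the primes $p\mid n$ (writing $d = \prod_p p^{k_p}$ one has $\gcd(d,n/d) = \prod_p p^{\mu(k_p)}$), expanding $\prod_{p\mid n} Z_{n,p}(s)$ and comparing term by term shows that (b) is equivalent to the identity
$$ u\!\left(\tfrac{n}{d^2}\right) = \prod_{p\mid n} u\!\left(\tfrac{n}{p^{2k_p}}\right), \qquad d = \prod_{p\mid n} p^{k_p}. $$
To prove this I would use the multiplicativity of $u$ from Proposition \ref{uprop1}: writing $n/d^2 = N/M$ in lowest terms and splitting $N/M$ by the Chinese Remainder Theorem into a sum of fractions supported at the individual primes dividing $M$, Proposition \ref{uprop1}(a) factors $u(N/M)$ into prime-local pieces, while the primes dividing $N$ (and those whose argument is an even integer) contribute factors equal to $1$, or else force both sides to vanish simultaneously. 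One then identifies each local piece with $u(n/p^{2k_p})$ using the explicit evaluations of Corollary \ref{upmcor} and Proposition \ref{uprop2}(b), checking that the numerators agree up to squares modulo the relevant prime power (so the Legendre/Jacobi symbols coincide) and reassembling the roots of unity via $w^8 = 1$.

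Part (c) then follows from (b) together with Proposition \ref{Znpfac}, which exhibits $(1 - p^{s-1/2})Z_{n,p}(s)$ as a product of factors of the shape $1 + \zeta\, p^{j(s-1/2)}$ with $j \ge 1$ and $|\zeta| = 1$, the coefficients being $\pm 1$, $w^{1-p}\legendre{m}{p}$, or $w^m\legendre{2}{m}$, all of modulus one; this unimodularity is precisely the statement that the normalized Gauss sum is a unit. Because $Z_{n,p}(s)$ is a finite exponential sum, hence entire, the factor $1 - p^{s-1/2}$ in the denominator must cancel against the numerator, and each surviving factor $1 + \zeta\, p^{j(s-1/2)}$ vanishes only where $|p^{j(s-1/2)}| = 1$, i.e. $\Re(s) = \tfrac12$. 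Since $Z_n = \prod_{p\mid n} Z_{n,p}$ by (b), its zero set is the union of these, so all zeros lie on the critical line.

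The main obstacle is the pointwise multiplicativity underlying (b). Although it is morally the multiplicativity of quadratic Gauss sums, turning it into the precise identity above requires careful bookkeeping of the phases and Jacobi symbols across the CRT decomposition — in particular verifying that each local factor reconstructs $u(n/p^{2k_p})$ with its \emph{full} prime-to-$p$ part rather than merely the $p$-part — and handling the degenerate cases where an argument becomes an odd integer and the corresponding $u$-value vanishes on both sides.
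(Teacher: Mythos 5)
Your parts (a) and (c) are essentially the paper's own arguments: (a) is the same computation with the reciprocity $u(p/q)=\sqrt{p/q}\,w\,u(-q/p)$ and $u(-r)=\ol{u(r)}$, just run from $Z_n(1-s)$ toward $\ol{Z_n(\ol s)}$ rather than the reverse, and (c) is exactly the intended deduction from (b) and Proposition \ref{Znpfac}. Your reduction of (b) to the pointwise identity $u(n/d^2)=\prod_p u(n/p^{2k_p})$ is also the right reduction, and matches the paper's reformulation $Z_n(s)=\sum_{d\mid n}f_n^{(s)}(d)$ with $f_n^{(s)}$ multiplicative on coprime divisors.

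Where you diverge is in how you propose to prove that pointwise identity, and there your sketch has a concrete pitfall. The identity you need for two coprime divisors, $u(n/d_1^2)\,u(n/d_2^2)=u(n/(d_1d_2)^2)$, is \emph{literally} Proposition \ref{uprop1}(b) with $p=n/(d_1d_2)$, $q=d_1$, $r=d_2$ (note $n/d_1^2=pd_2/d_1$ and $n/d_2^2=pd_1/d_2$); iterating it gives the full product over primes, and no explicit evaluation of $u$ via Corollary \ref{upmcor} or Proposition \ref{uprop2}(b) is needed. Your alternative --- split $n/d^2$ by partial fractions and apply Proposition \ref{uprop1}(a) prime by prime --- runs into the parity hypothesis of \ref{uprop1}(a): each local piece $c_p/p^{e_p}$ must have $c_p\,p^{e_p}$ even, and the naive CRT representative need not satisfy this. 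Concretely, $2/15=1/3-1/5$ but $u(2/15)=i/\sqrt{15}\neq 0=u(1/3)\,u(-1/5)$; since $u(a/b)$ has period $2b$ in $a$, not $b$, you must choose the local numerators even at the odd primes (here $2/15=4/3-6/5$ works) and then verify the total shift is a multiple of $2$, which is exactly the difficulty Proposition \ref{uprop1}(b) already resolves with its $q^2+r^2=(q+r)^2-2qr$ device. So the argument can be completed along your lines, but as written the CRT step would fail on such examples; citing \ref{uprop1}(b) instead removes the obstacle you flagged and collapses part (b) to the paper's short bookkeeping of $\gcd(d_1d_2,m)=\gcd(d_1,d_2m)\gcd(d_2,d_1m)$.
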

\begin{proof}
	By the reciprocity law, we have
	\begin{align*} \ol{Z(\ol{s})} &=  \sum_{n=d_1d_2} w^{-1}\frac{d_2}{\sqrt{n}}u(\frac{d_1}{d_2}) \frac{(n/d_2)^s}{\gcd(d_1,d_2)}= n^{s-\frac{1}{2}} \cdot w^{-1} \cdot \sum_{n=d_1 d_2} u(\frac{d_2}{d_1}) \frac{(d_2)^{1-s}}{\gcd(d_2,d_1)}\\
	&= n^{\frac{s}{2}-\frac{1-s}{2}}\cdot w^{-1} \cdot Z(1-s),\end{align*}
	which proves $(a)$.  For $(b)$, we first write
	$$ Z_n(s) = \sum_{d|n} f_n^{(s)}(d),\ \ \ f_n^{(s)}(d) = u(\frac{n}{d^2}) \frac{d^s}{\gcd(d,n/d)}.$$
	We then check that $f_n^{(s)}(d_1d_2)=f_n^{(s)}(d_1)f_n(d_2)$ if $d_1$, $d_1$ are coprime divisors of $n$. Indeed, writing $n=d_1d_2m$, we have
	$$ \gcd(d_1d_2,m)=\gcd(d_1,d_2m)\gcd(d_2,d_1m),$$
	and from Prop. \ref{uprop1}(b) applied to $p=m$, $q=d_1$, $r=d_2$, we have
	$$ u(\frac{n}{d_1^2})u(\frac{n}{d_2^2}) = u(\frac{n}{d_1^2d_2^2}).$$
	It follows that, letting $\alpha_p$ denote the exponent of $p$ in the factorization of $n$, 
	$$ Z_n(s) = \prod_{p|n\text{ prime}} \sum_{k=0}^{\alpha_p} u(\frac{n/p^{k}}{p^k}) \frac{p^{ks}}{\gcd(p^k,n/p^k)} = \prod_{{ p|n \text{ prime}}}Z_{n,p}(s)$$
	Finally, $(d)$ is immediate from $(c)$ and Proposition \ref{Znpfac}.
\end{proof}

\section{Reciprocity Law for Quadratic Forms}

In this section we first recall the classical proof of the reciprocity law for Gauss sums using the transformation formula of the Jacobi theta function.  Then we generalize the proof to the Riemann theta function, and obtain a reciprocity law for exponential sums of quadratic forms. 

Both proofs are included, since the former is relatively short and may clarify the latter.

\subsection{Jacobi Theta Function}

For $(z,\tau)\in \CC\times \frakH$, the Jacobi theta function is
\begin{align} \vartheta(z,\tau) = \sum_{n\in\ZZ} e(\frac{1}{2}n^2\tau+ nz),\end{align}
and satisfies the functional equation
\begin{align}
\vartheta(\frac{z}{\tau},\frac{-1}{\tau}) &= \sqrt{-i\tau} e(\frac{z^2}{2\tau}) \vartheta(z,\tau).\label{JFE}
\end{align} 
\begin{lemma}\label{Tlem}For any $r\in \RR$, 
	$$\lim\limits_{\tau\rar i\infty}\vartheta(r,\tau)=1.$$
\end{lemma}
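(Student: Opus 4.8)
The plan is to isolate the $n=0$ term of the defining series, which contributes exactly $1$, and to show that all the remaining terms vanish in the limit $\tau \to i\infty$. First I would write out the general summand: with $z = r \in \RR$, the $n$-th term of $\vartheta(r,\tau)$ is $\exp(\pi i n^2 \tau + 2\pi i n r)$. Because $r$ is real, the factor $e(nr) = \exp(2\pi i n r)$ has modulus $1$, so the modulus of the $n$-th term is $\exp(-\pi n^2 \Im(\tau))$, while the $n=0$ term equals $1$ identically.

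Next I would estimate the tail directly. Writing $y = \Im(\tau) > 0$,
$$ \left| \vartheta(r,\tau) - 1 \right| \leq \sum_{n \neq 0} \exp(-\pi n^2 y) = 2 \sum_{n=1}^{\infty} \exp(-\pi n^2 y). $$
Since $n^2 \geq n$ for every integer $n \geq 1$, each summand is bounded by $\exp(-\pi n y)$, so the right-hand side is dominated by the geometric series
$$ 2 \sum_{n=1}^{\infty} e^{-\pi n y} = \frac{2 e^{-\pi y}}{1 - e^{-\pi y}}, $$
which tends to $0$ as $y \to \infty$. Hence $\vartheta(r,\tau) \to 1$, as claimed.

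The argument is entirely elementary, and there is no serious obstacle; the only points requiring care are conceptual. First, the limit $\tau \to i\infty$ controls only $\Im(\tau)$, so the bound must be independent of $\Re(\tau)$ — and it is, since the real part of $\tau$ affects only the phase of each term and not its modulus. Second, the hypothesis $r \in \RR$ is what guarantees $|e(nr)| = 1$, so these oscillatory factors cannot amplify the tail; if $r$ were allowed a nonzero imaginary part the estimate would fail. With these observations the sandwich estimate above closes the proof.
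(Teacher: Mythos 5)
Your proof is correct and follows essentially the same route as the paper: isolate the $n=0$ term, note that $r\in\RR$ makes each remaining summand have modulus $\exp(-\pi n^2\Im(\tau))$, and show the tail vanishes as $\Im(\tau)\to\infty$. The only cosmetic difference is that you dominate the tail by a geometric series via $n^2\geq n$, whereas the paper compares it with a Gaussian integral; both give a bound tending to $0$.
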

\begin{proof}
	We have
	$$\vartheta(r,\tau)-1 = 2\sum_{n=1}^\infty \exp(\frac{1}{2}\pi i n^2 \tau)\cos(2\pi  nr),$$
	so that, for $\tau=u+iv$, 
	$$ |\vartheta(r,\tau)-1|\leq 2\sum_{n=1}^\infty \exp(-\frac{1}{2}\pi n^2 v) < 2\int_{0}^\infty \exp(-\frac{1}{2}\pi x^2v) = \frac{2}{\sqrt{2v}},$$
	which goes to $0$ as $v \rar \infty$.
\end{proof}

For $m>0$ a positive integer and $k\in \ZZ$, we put
\begin{align}\vartheta_{k,m}(z,\tau) = \sum_{n\equiv k(\mod m)} e(\frac{1}{2}n^2\tau+nz),\end{align}
so that
$$ \vartheta(z,\tau) = \sum_{k=0}^{|m|-1} \vartheta_{k,m}(z).$$

\begin{lemma}\label{Tkm}
	$$ \vartheta_{k,m}(z,\tau) = \frac{1}{m}(-i\tau)^{-1/2} e(-\frac{z^2}{2\tau}) \vartheta(\frac{k}{m}+\frac{z}{m\tau},\frac{-1}{m^2\tau}).$$
\end{lemma}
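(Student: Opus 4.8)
The plan is to collapse the progression sum $\vartheta_{k,m}$ into a single full Jacobi theta function at a rescaled modular parameter, and then invoke the functional equation (\ref{JFE}) to convert that parameter into $-1/(m^2\tau)$.

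First I would reindex the arithmetic progression: write each $n\equiv k\ (\mod m)$ as $n=k+mr$ with $r$ ranging over $\ZZ$, and substitute into the definition of $\vartheta_{k,m}$. Expanding $(k+mr)^2=k^2+2kmr+m^2r^2$ and pulling out the $r$-independent factor $e(\tfrac12 k^2\tau+kz)$, the remaining sum is $\sum_{r\in\ZZ} e(\tfrac12 r^2(m^2\tau)+r(km\tau+mz))$, which is exactly $\vartheta(km\tau+mz,\,m^2\tau)$. This yields the intermediate identity $\vartheta_{k,m}(z,\tau)=e(\tfrac12 k^2\tau+kz)\,\vartheta(km\tau+mz,\,m^2\tau)$, where $m^2\tau\in\frakH$ keeps us inside the domain of $\vartheta$ so no convergence issue arises.

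Next I would apply (\ref{JFE}), solved for the theta function, in the form $\vartheta(w,\sigma)=(-i\sigma)^{-1/2}e(-\tfrac{w^2}{2\sigma})\,\vartheta(\tfrac{w}{\sigma},-\tfrac1\sigma)$, with the substitutions $\sigma=m^2\tau$ and $w=km\tau+mz=m(k\tau+z)$. The key checks are that $-1/\sigma=-1/(m^2\tau)$, that the new argument $w/\sigma=k/m+z/(m\tau)$ matches the argument on the right-hand side of the claim, and that $(-i\sigma)^{-1/2}=\tfrac1m(-i\tau)^{-1/2}$, the factorization of the square root being legitimate precisely because $m>0$ fixes the branch.

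The final step is bookkeeping of the exponential prefactors. I would compute $w^2/(2\sigma)=(k\tau+z)^2/(2\tau)=\tfrac12 k^2\tau+kz+\tfrac{z^2}{2\tau}$, so that the factor $e(\tfrac12 k^2\tau+kz)$ produced in the reindexing step cancels the two $k$-dependent terms coming from $e(-\tfrac{w^2}{2\sigma})$ and leaves exactly $e(-\tfrac{z^2}{2\tau})$, giving the asserted formula. The only delicate point in the whole argument is this cancellation carried out together with the correct branch of $(-i\sigma)^{-1/2}$; everything else is routine substitution.
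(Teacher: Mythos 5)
Your proposal is correct and follows exactly the paper's argument: reindex $n=k+mr$ to obtain $\vartheta_{k,m}(z,\tau)=e(\tfrac12 k^2\tau+kz)\,\vartheta(m(k\tau+z),m^2\tau)$, apply the functional equation (\ref{JFE}) at $\sigma=m^2\tau$, and cancel the $k$-dependent exponentials via $(k\tau+z)^2/(2\tau)=\tfrac12 k^2\tau+kz+z^2/(2\tau)$. The branch observation $(-im^2\tau)^{-1/2}=\tfrac1m(-i\tau)^{-1/2}$ for $m>0$ is the same bookkeeping the paper performs implicitly.
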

\begin{proof}We have
	\begin{align*}\vartheta_{k,m}(z,\tau) &= \sum_{n\in \ZZ} e(\frac{1}{2}(nm+k)^2\tau+(nm+k)z) = e(\frac{1}{2}k^2\tau + kz)\sum_{n\in \ZZ}e(\frac{1}{2}n^2(m^2\tau) + nm(k\tau+z))\\
	&=e(\frac{1}{2}k^2\tau+kz) \vartheta(m(k\tau+z),m^2\tau).
	\end{align*}
	Now using (\ref{JFE}),
	\begin{align*}\vartheta(m(k\tau+z),m^2 \tau) &= (-im^2\tau)^{-\frac{1}{2}} e(-\frac{(k\tau+z)^2}{2\tau}) \vartheta(\frac{k\tau+z}{m\tau},\frac{-1}{m^2\tau}).\end{align*}
	Then
	$$ \vartheta_{k,m}(z,\tau) = (-im^2\tau)^{-\frac{1}{2}} e(\frac{1}{2}k^2\tau + kz - \frac{(k\tau+z)^2}{2\tau}) \vartheta(\frac{k}{m}+\frac{z}{m\tau},\frac{-1}{m^2\tau}).$$ 
The claim then follows from
	$$ \frac{1}{k^2\tau}+kz-\frac{(k\tau+z)^2}{2\tau})= -\frac{z^2}{2\tau}.$$
\end{proof}

\begin{prop} Let $a,b$ be non-zero and coprime, and $c\in \ZZ$ such that $ab+c$ is even.  Then
	$$ \frac{1}{|b|}\sum_{x=0}^{|b|-1} e(\frac{ax^2+cx}{2b})\vartheta(\frac{x}{b}+z,\tau)  = (\sqrt{it}) e\left(-\frac{s^2b}{2a}\right) \frac{1}{|a|}\sum_{y=0}^{|a|-1} e\left(\frac{-by^2+cy}{2a}\right)\vartheta(\frac{y}{a}+z-\frac{c}{2ab},\tau-\frac{1}{ab}).$$
\end{prop}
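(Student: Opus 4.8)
The plan is to expand the theta function on the left, interchange the finite $x$–sum with the absolutely convergent series defining $\vartheta$, and recognize the inner $x$–sum as a one–variable Gauss sum to which the reciprocity law (\ref{SR}) applies. Writing $\vartheta(\tfrac{x}{b}+z,\tau)=\sum_{n\in\ZZ}e(\tfrac12 n^2\tau+n(\tfrac{x}{b}+z))$ and grouping the terms involving $x$, the coefficient of the exterior $n$–summand is $\sum_{x=0}^{|b|-1}e\big(\tfrac{ax^2+(c+2n)x}{2b}\big)=\sqrt{|b|}\,S_{a,\,c+2n,\,b}$; absolute convergence of the $n$–series for $\tau\in\frakH$ justifies the interchange, and $ab+(c+2n)$ is even, so $S_{a,c+2n,b}$ is defined. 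I would then apply (\ref{SR}) with parameters $(a,c+2n,b)$, which rewrites this as $\sqrt{|b|}\,e(\tfrac{\sgn(ab)}{8})\,e\big(-\tfrac{(c+2n)^2}{8ab}\big)\,S_{-b,\,c+2n,\,a}$, where $S_{-b,c+2n,a}=\tfrac{1}{\sqrt{|a|}}\sum_{y=0}^{|a|-1}e\big(\tfrac{-by^2+(c+2n)y}{2a}\big)$.

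The crucial point, and the source of the shifted arguments on the right, is that the sign $e(\sgn(ab)/8)$ does not depend on $n$ (the parity and sign data of $c+2n$ coincide with those of $c$), so it leaves the $n$–sum, while the remaining $n$–dependent factor recombines into a shifted theta. Using
$$\tfrac12 n^2\tau-\tfrac{(c+2n)^2}{8ab}=\tfrac12 n^2\big(\tau-\tfrac1{ab}\big)-\tfrac{cn}{2ab}-\tfrac{c^2}{8ab},\qquad \tfrac{(c+2n)y}{2a}=\tfrac{cy}{2a}+\tfrac{ny}{a},$$
the total exponent in $(n,y)$ becomes $\tfrac12 n^2(\tau-\tfrac1{ab})+n\big(z-\tfrac{c}{2ab}+\tfrac{y}{a}\big)+\tfrac{-by^2+cy}{2a}-\tfrac{c^2}{8ab}$. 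Summing over $n\in\ZZ$ reassembles exactly $\vartheta\big(\tfrac{y}{a}+z-\tfrac{c}{2ab},\,\tau-\tfrac1{ab}\big)$, the $n$–free remainder $e(\tfrac{-by^2+cy}{2a})$ is the Gauss kernel appearing on the right, and the constant $e(-c^2/8ab)$ factors out front.

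It remains to match the scalar prefactor. Collecting constants, the left side becomes $\tfrac{1}{\sqrt{|ab|}}\,e(\tfrac{\sgn(ab)}{8})\,e(-\tfrac{c^2}{8ab})\sum_{y=0}^{|a|-1}e(\tfrac{-by^2+cy}{2a})\,\vartheta\big(\tfrac{y}{a}+z-\tfrac{c}{2ab},\tau-\tfrac1{ab}\big)$, so the claim reduces to the numerical identity $\tfrac{1}{\sqrt{|ab|}}\,e(\tfrac{\sgn(ab)}{8})=\sqrt{it}\cdot\tfrac1{|a|}$ with $t=a/b$, together with $e(-c^2/8ab)=e(-s^2b/2a)$ for $s=c/(2b)$; both are immediate once one uses the convention $\sqrt{-x}=i\sqrt{|x|}$, under which $e(\sgn(ab)/8)$ is exactly the phase of $\sqrt{it}$ for $a,b$ of either sign. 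I expect the main obstacle to be organizational: tracking the Gauss–sum sign and the completed–square constant without error, and confirming that the sign $e(\sgn(ab)/8)$—unlike the quadratic phase $e(-(c+2n)^2/8ab)$—is genuinely independent of $n$. Invoking (\ref{SR}) is permissible since that law is recorded in the Introduction; to keep the argument internal to this section one instead uses the sign–free Poisson form of reciprocity, carrying an undetermined constant in place of $e(\sgn(ab)/8)$, and fixes it through the transformation formula (\ref{JFE}) and Lemma \ref{Tkm}—the classical theta computation this subsection recalls.
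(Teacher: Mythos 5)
Your computation is correct and it does establish the displayed identity, but it runs in the opposite logical direction from the paper, and in this section that matters. The paper's proof never touches the reciprocity law (\ref{SR}): it decomposes $\vartheta(s+z,t+\tau)$ into the partial theta series $\vartheta_{k,b}$, applies the transformation formula (\ref{JFE}) three times (via Lemma \ref{Tkm} on each side, and once to $\vartheta(s+z,t+\tau)$ itself), and obtains the prefactor as the product $X\cdot Y=\sqrt{it}\,e(-s^2/2t)$ of the three automorphy factors. The point of that arrangement is that the Proposition, followed by $\tau=it$, $t\to\infty$ and Lemma \ref{Tlem}, is what \emph{produces} the reciprocity law (\ref{R}) --- including the eighth root of unity, i.e.\ the sign of the Gauss sum. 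Your route instead unfolds $\vartheta(\tfrac{x}{b}+z,\tau)$, recognizes the inner sum as $\sqrt{|b|}\,S_{a,c+2n,b}$, and invokes (\ref{SR}); the subsequent bookkeeping (the $n$-independence of $e(\sgn(ab)/8)$, the completed square, the reassembly into $\vartheta(\tfrac{y}{a}+z-\tfrac{c}{2ab},\tau-\tfrac{1}{ab})$, and the matching of $e(-c^2/8ab)$ with $e(-s^2b/2a)$ and of $e(\sgn(ab)/8)/\sqrt{|ab|}$ with $\sqrt{it}/|a|$) all checks out. As a freestanding verification of the identity this is fine, and arguably slicker; but inside this section it is circular, since the Proposition is the engine that is supposed to yield (\ref{SR}).

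You anticipate the objection and propose to substitute the sign-free Poisson form $S_{a,b,c}=C(b,ac)\,e(-\tfrac{b^2}{8ac})\,S_{-c,b,a}$ from Section 1.1 (the unnumbered Proposition after Lemma \ref{whf}), carrying $C(c+2n,ab)=C(c,ab)$ out of the $n$-sum as an undetermined constant, to be ``fixed through (\ref{JFE}) and Lemma \ref{Tkm}.'' The observation that $C(c+2n,ab)$ depends only on the parity of $c$ is correct and that version of the unfolding is legitimate. But the last step is a gesture, not an argument: showing $C(c,ab)=e(\sgn(ab)/8)$ \emph{is} the determination of the sign of the Gauss sum, which is precisely the nontrivial content the theta-function method is designed to supply, and extracting it from (\ref{JFE}) is not a one-line patch --- it is, in substance, the paper's entire proof. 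So either accept that your argument derives the Proposition from (\ref{SR}) (valid, but useless for the section's stated purpose of recovering (\ref{R})), or carry out the automorphy-factor computation in full as the paper does.
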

\begin{proof}
Let $t=a/b$, $s=c/2b$. The condition on $a$ and $b$ ensures that the function $e(\frac{1}{2}x^2t+sx)$ has period $b$ in $x$. For $z\in \CC$, $\tau\in\frakH$ we have
\begin{align}\begin{split}\vartheta(s+z,t+\tau)&= \sum_{n} e\left(\frac{n^2}{2}t+ns\right)e\left(\frac{n^2}{2}t+nz\right)= \sum_{k(\mod b)} e\left(\frac{k^2}{2}t+ks\right) \vartheta_{k,b}(z,\tau)\\
&= \frac{1}{|b|} (-i\tau)^{-1/2} e(-\frac{z^2}{2\tau}) \sum_{k(\mod b)} e\left(\frac{k^2}{2}t+ks\right) \vartheta(\frac{k}{b} + \frac{z}{b\tau},\frac{-1}{b^2\tau}).\label{F0}\end{split}
\end{align}
Then we can write
\begin{align}\label{F1} \frac{1}{|b|}\sum_{k(\mod b)} e\left(\frac{k^2}{2}t+ks\right) \vartheta(\frac{k}{b}+\frac{z}{b\tau}, -\frac{1}{b^2\tau}) = (-i\tau)^{\frac{1}{2}} e(\frac{z^2}{2\tau}) \vartheta(s+z,t+\tau).\end{align}

Now, by (\ref{JFE})
\begin{align}\label{F2}
\vartheta(s+z,t+\tau) = (-i(t+\tau))^{-1/2}e\left(-\frac{(s+z)^2}{2(t+\tau)}\right) \vartheta(\frac{s+z}{t+\tau},\frac{-1}{t+\tau}).
\end{align}
Letting
$$ s_1 = \frac{s}{t},\ \ t_1 = \frac{-1}{t},\ \ \ z_1 =\frac{zt-s\tau}{t(t+\tau)},\ \ \tau_1=\frac{\tau}{t(t+\tau)},$$
we have
$$ \frac{s+z}{t+\tau} = s_1+ z_1,\ \ \ \frac{-1}{t+\tau} = t_1+\tau_1,\ \ \ \frac{z_1}{\tau_1}=t\frac{z}{\tau}-s.$$
The same way as (\ref{F0}),
\begin{align} \vartheta(\frac{s+z}{t+\tau},\frac{-1}{t+\tau}) = \frac{1}{|a|} (-i\tau_1)^{-1/2} e(\frac{-{z_1}^2}{2\tau_1}) \sum_{r(\mod a)} e\left(\frac{r^2}{2}t_1+rs_1\right) \vartheta(\frac{r}{a} + \frac{z_1}{a\tau_1}, -\frac{1}{a^2\tau_1}).\label{F3}\end{align}
Now, combining (\ref{F1}), (\ref{F2}), (\ref{F3}), we obtain
$$\frac{1}{|b|} \sum_{k(\mod b)} e\left(\frac{k^2t}{2}+ks\right) \vartheta(\frac{k}{b}+\frac{z}{b\tau},-\frac{1}{b^2\tau}) = X \cdot Y. \frac{1}{|a|}\sum_{r(\mod a)} e\left(\frac{r^2t_1}{2}+rs_1\right) \vartheta(\frac{r}{a}+\frac{z_1}{a\tau_1},-\frac{1}{a^2\tau_1}),$$
where
\begin{align*} X &= (-i\tau)^{\frac{1}{2}}(-i(t+\tau))^{-\frac{1}{2}} (-i\tau_1)^{-\frac{1}{2}} = \sqrt{it},\\
Y &= e(\frac{z^2}{2\tau})e\left(-\frac{(s+z)^2}{2(t+\tau)}\right)e(-\frac{z_1^2}{2\tau_1}) = e\left(-\frac{s^2}{2t}\right).
\end{align*}
Replacing $z$ by $\tau z$ and $z_1/\tau_1$ by $tz-s$, 
$$ \frac{1}{|b|}\sum_{k(\mod b)} e\left(\frac{k^2t}{2}+ks\right)\vartheta(\frac{k}{b}+\frac{z}{b},\frac{-1}{b^2\tau}) = (\sqrt{it}) e\left(-\frac{s^2}{2t}\right) \frac{1}{|a|}\sum_{r(\mod a)} e\left(-\frac{r^2}{2t}+\frac{rs}{t}\right)\vartheta(\frac{r}{a} + \frac{tz-s}{a},\frac{-t^2-t\tau}{a^2\tau}).$$
\end{proof}
If we put $\tau = it$ in the statement of the Proposition and let $t\rar \infty$, by Lemma \ref{Tlem} we obtain the reciprocity law for Gauss sums.

\subsection{Riemann Theta Function}

For $({\bf z},\tau)\in\CC^n\times M_n(\CC)$, and ${\bf x}\in \CC^n$, let
\begin{align} \label{fzt}f_{{\bf z},\tau}({\bf x}) = \frac{1}{2}{}^t {\bf x}\tau {\bf x} + {}^t {\bf x} {\bf z},\ \ \ {\bf x}\in \ZZ^n.\end{align}

Let $\frakH_n$ denote the Siegel upper half-space of degree $n$. The Riemann theta function is
$$ \Theta: \CC^n \times \frakH_n \rar \CC, \ \ \ \Theta({\bf z},\tau) = \sum_{{\bf x}\in \ZZ^n} e(f_{{\bf z},\tau}({\bf x}))=\sum_{{\bf x}\in \ZZ^n}\exp(\pi i {}^t{\bf x} \tau {\bf x} + 2\pi i {}^t{\bf x} {\bf z}).$$

It satisfies the transformation formula: 
$$ \Theta({}^t (c\tau+d)^{-1} {\bf z}, \gamma\cdot \tau) = \zeta \det(c\tau+d)^{\frac{1}{2}}e(\frac{1}{2} {}^t {\bf z} (c\tau+d)^{-1}c{\bf z}) \Theta({\bf z},\tau),$$
where 
$$\gamma\cdot \tau = (a\tau + b)(c\tau +d)^{-1}\ \ \ \text{ for } \gamma = \mtwo{a& b \\ c & d}\in \Sp_{2n}(\ZZ),$$
and $\zeta$ is an eighth root of unity that in general depends on $\gamma$ and the choice of the square root $\det(c\tau+d)^{\frac{1}{2}}$. In the special case $\gamma = \mtwo{0 & -1\\ 1 & 0}$ the transformation formula takes the form
\begin{align}\label{RFE} \Theta(\tau^{-1}{\bf z},-\tau^{-1}) = \alpha(\bfz,\tau)\Theta({\bf z},\tau),\end{align}
where
\begin{align}\label{alpha} \alpha(\bfz,\tau) = \det(-i\tau)^{\frac{1}{2}} \exp(i\pi ( {}^t \bfz \tau^{-1} \bfz)).\end{align}
and $\tau \rar \det(-i\tau)^{\frac{1}{2}}$ the unique branch that is positive when $\tau$ is purely imaginary (\cite[p.195]{Mum83I}). This is well-defined since $\frakH_n$ is simply connected.

Our first task is to extend $\tau \rar \det(-i\tau)^{\frac{1}{2}}$ to non-singular $\frakt\in \Sym_n(\RR)$. We write $\sigma_{\frakt}$ for the signature of $\frakt$, so that
$$ \sigma_{\frakt} = \#(\text{positive eigenvalues of }\frakt) - \#(\text{negative eigenvalues of }\frakt),$$
counted with multiplicity.

\begin{lemma}\label{siglem} Let $\frakt \in \Sym_n(\RR)$ be non-singular. Then
	$$\lim_{\epsilon\rar 0^+} \det(-i(\frakt+\epsilon i I_n))^{\frac{1}{2}}=|\det(\frakt)|^{\frac{1}{2}} e^{-i\pi \sigma_{\frakt}/4}.$$
\end{lemma}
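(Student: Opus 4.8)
The plan is to diagonalize $\frakt$ by an orthogonal change of variables and reduce the whole computation to the scalar case $n=1$, where the limit is a one-line calculation; the only substantive issue is keeping track of the globally chosen branch $g(\tau) \defq \det(-i\tau)^{\frac{1}{2}}$ on $\frakH_n$. First I would write $\frakt = {}^t O\,\Lambda\, O$ with $O\in \Or(n)$ and $\Lambda = \diag(\lambda_1,\dots,\lambda_n)$, where the $\lambda_j$ are the nonzero eigenvalues of $\frakt$. For $\epsilon>0$ the matrix $\tau_\epsilon \defq \frakt + \epsilon i I_n$ lies in $\frakH_n$, since its imaginary part is $\epsilon I_n>0$, and
$$\det(-i\tau_\epsilon) = \det\bigl(-i(\Lambda+\epsilon i I_n)\bigr) = \prod_{j=1}^n (\epsilon - i\lambda_j).$$
Note also that every $\tau\in\frakH_n$ is invertible (if $\tau x=0$ then ${}^t\bar x\,(\Im\tau)\,x=0$ forces $x=0$), so $\det(-i\tau)$ is nowhere vanishing on the connected, simply connected set $\frakH_n$.

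The core of the argument is the reduction of the branch, which I would carry out using the uniqueness of a continuous square root of a nonvanishing function on a connected set (two such roots agreeing at one point agree everywhere). I would establish two facts. (i) $g({}^tO\tau O)=g(\tau)$ for all $\tau\in\frakH_n$: the map $\tau\mapsto {}^tO\tau O$ is a homeomorphism of $\frakH_n$ preserving the purely imaginary locus and fixing $\det(-i\tau)$, so $\tau\mapsto g({}^tO\tau O)$ is a continuous square root of $\det(-i\tau)$ agreeing with $g$ (both positive) on purely imaginary $\tau$, hence equal to $g$. (ii) On the diagonal locus $\frakH_1^{\,n}\hookrightarrow \frakH_n$ the branch factors, $g(\tau)=\prod_{j} g_1(\tau_j)$, where $g_1$ is the weight-$\frac12$ branch of $(-i\tau_j)^{1/2}$ on $\frakH$ that is positive on $i\RR_{>0}$: indeed $\prod_j g_1(\tau_j)$ is a continuous square root of $\prod_j(-i\tau_j)=\det(-i\tau)$ on the diagonal locus agreeing with $g$ at purely imaginary diagonal points, so the two coincide by connectedness.

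Granting (i) and (ii), it remains to compute the scalar limit. Observing that $g_1(\tau)$ is exactly the branch of $(-i\tau)^{1/2}$ with positive real part (for $\tau\in\frakH$, $-i\tau$ has positive real part), I would evaluate
$$\lim_{\epsilon\to 0^+} g_1(\lambda+\epsilon i) = \lim_{\epsilon\to 0^+}(\epsilon - i\lambda)^{1/2} = \sqrt{|\lambda|}\,e^{-i\pi\sgn(\lambda)/4},$$
since $-i\lambda=|\lambda|e^{\mp i\pi/2}$ for $\lambda\gtrless 0$ and the positive-real-part root sends this to $\sqrt{|\lambda|}e^{\mp i\pi/4}$. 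Assembling via (i) and (ii),
$$\lim_{\epsilon\to 0^+} g(\tau_\epsilon) = \lim_{\epsilon\to 0^+}\prod_{j=1}^n g_1(\lambda_j+\epsilon i) = \prod_{j=1}^n \sqrt{|\lambda_j|}\,e^{-i\pi\sgn(\lambda_j)/4} = |\det(\frakt)|^{\frac12}\,e^{-i\pi\sigma_{\frakt}/4},$$
using $\prod_j|\lambda_j|=|\det\frakt|$ and $\sum_j \sgn(\lambda_j)=\sigma_{\frakt}$. The hard part will be the branch bookkeeping in steps (i) and (ii)—the honest content is precisely the claim that the single global square root chosen on $\frakH_n$ is forced to be the product of the $n$ scalar branches along the diagonal; everything else, including the scalar limit, is routine once the nonvanishing of $\det(-i\tau)$ and the connectedness of the relevant loci are in hand.
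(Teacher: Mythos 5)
Your proof is correct and follows essentially the same route as the paper's: orthogonally diagonalize, reduce to the scalar limit $(\epsilon - i\lambda)^{1/2}\to\sqrt{|\lambda|}\,e^{-i\pi\sgn(\lambda)/4}$, and pin down the branch by uniqueness of continuous square roots on the connected set $\frakH_n$. The paper organizes the branch bookkeeping slightly differently---writing $\Im\tau={}^t a a$ to obtain a formula $\det(-i\tau)^{\frac{1}{2}}=|\det(a)|\prod_j\sqrt{1-id_j}$ valid on all of $\frakH_n$ before specializing to $\tau=\frakt+\epsilon i I_n$---but this is the same argument in substance, and your explicit appeal to the two connectedness/uniqueness steps is if anything more careful than the paper's.
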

\begin{proof} Let $\tau=u+iv\in \frakH_n$, and write $v={}^t a a$ for some $a\in\GL_n(\RR)$. Then ${}^t a^{-1} \tau a^{-1}= u'+iI_n$, where $u'={}^ta^{-1} u a^{-1}$, and $\det(-i\tau) = |\det(a)|^2 \det(I_n-iu')$. Writing $u'=b\delta b^{-1}$ for $b$ unitary and $\delta=\diag(d_1,\cdots, d_n)$ diagonal, $\det(I_n-iu') = \det(I_n - i \delta) = \prod_{j=1}^n (1-id_j).$ For $z=re^{i\theta}$, $\theta\in (-\pi,\pi)$, let us for the moment write $\sqrt{z}=r^{\frac{1}{2}}e^{i\theta/2}$. Then we have
	$$ \det(-i\tau)^{\frac{1}{2}} = |\det(a)| \prod_{j=1}^n \sqrt{1-id_j}.$$
	Now let $\tau = \frakt + \epsilon i I_n$, and $d_1,\cdots, d_n$ be the eigenvalues of $\frakt$ with multiplicity. We have
	$$ \det(-i(\frakt+\epsilon i I_n))^{\frac{1}{2}} = \epsilon^{\frac{n}{2}} \prod_{j=1}^n \sqrt{1 - \frac{i d_j}{\epsilon}}.$$
	Since for any real number $d$, 
	$$ \lim_{\epsilon\rar0^+} \frac{\sqrt{1-\frac{id}{\epsilon}}}{\sqrt{|1-\frac{id}{\epsilon}|}}= e^{-i\pi \sgn(d)}/4,$$
	we have
	$$ \lim_{\epsilon \rar 0^+} \det(-i(\frakt+\epsilon iI_n))^{\frac{1}{2}} = \prod_{j=1}^n |1-id_j|^{\frac{1}{2}} e^{-i\pi \sgn(d_j)/4}= |\det(\frakt)|^{\frac{1}{2}} e^{-i\pi \sigma_{\frakt}/4}.$$
\end{proof}

\begin{lemma}\label{limlem}Let ${\bfz}\in \RR^n$, and ${}^t u = u\in M_n(\RR)$. Then
	$$ \lim_{t \rar \infty} \Theta(\bfz,u+ it I_n) = 1.$$
\end{lemma}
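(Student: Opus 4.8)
The plan is to separate the $\bfx=\mathbf 0$ term from the defining series and to bound the remainder by a quantity that tends to $0$. Writing $\tau = u + itI_n$, the exponent attached to $\bfx\in\ZZ^n$ is
$$\pi i\,{}^t\bfx\tau\bfx + 2\pi i\,{}^t\bfx\bfz = \pi i\,{}^t\bfx u\bfx + 2\pi i\,{}^t\bfx\bfz - \pi t\,{}^t\bfx\bfx,$$
and since $u$ and $\bfz$ are real, the first two summands are purely imaginary. Hence the $\bfx=\mathbf 0$ term equals exactly $1$, while the general term has modulus $\exp(-\pi t\,|\bfx|^2)$, a quantity that does not involve $u$ or $\bfz$ at all. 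This is the key observation: the oscillatory factors carrying the dependence on $u$ and $\bfz$ are all of modulus one, so they play no role in the estimate.

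First I would write
$$\bigl|\Theta(\bfz, u+itI_n) - 1\bigr| \le \sum_{\bfx\in\ZZ^n\setminus\{\mathbf 0\}} e^{-\pi t\,|\bfx|^2} = \Bigl(\sum_{m\in\ZZ} e^{-\pi t m^2}\Bigr)^{\!n} - 1,$$
using that $|\bfx|^2 = \sum_j x_j^2$ factors the Gaussian sum over $\ZZ^n$ into a product of $n$ identical one-dimensional sums. Setting $g(t)=\sum_{m\in\ZZ}e^{-\pi t m^2} = \vartheta(0,it)$, Lemma \ref{Tlem} (applied with $r=0$ and $\tau = it\rar i\infty$) gives $g(t)\rar 1$ as $t\rar\infty$; alternatively this is immediate from the integral comparison already carried out in the proof of that lemma. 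Consequently $g(t)^n\rar 1$, the right-hand side above tends to $0$, and the claim follows.

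There is no essential obstacle here. Once one notes that the reality of $u$ and $\bfz$ forces every oscillatory factor to have modulus $1$, the problem collapses to the convergence of a product of one-variable theta constants, which is precisely the degree-one statement of Lemma \ref{Tlem}. The only point worth flagging is that the dominating series is independent of $u$ and $\bfz$, so the convergence is in fact uniform on $\RR^n\times\Sym_n(\RR)$; this uniformity is what makes the lemma usable when it is later fed into the limiting argument ($\frakt + \epsilon iI_n$, $\epsilon\rar 0^+$, combined with Lemma \ref{siglem}) that yields Theorem B.
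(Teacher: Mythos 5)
Your proof is correct and follows essentially the same route as the paper: isolate the $\bfx=\mathbf 0$ term, use the reality of $u$ and $\bfz$ to reduce the tail to the Gaussian sum $\sum_{\bfx\neq 0}e^{-\pi t|\bfx|^2}=g(t)^n-1$, and let $t\rar\infty$. The paper simply redoes the one-dimensional integral comparison inline instead of citing Lemma \ref{Tlem}, which is the alternative you already note.
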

\begin{proof}We have
	$$ \Theta(\bfz,u+it I_n)-1 = \sum_{\tiny\begin{array}{c}{\bf x}\in \ZZ^n,\\{\bf x}\neq 0\end{array}} \exp(-\pi {}^t{\bf x} {\bf x}t) e(\frac{1}{2} {}^t {\bf x} u {\bf x} + {}^t{\bf x} \bfz),$$
	so that
	\begin{align*} |\Theta(\bfz,u+it I_n)-1| \leq -1+\sum_{x_1,\cdots,x_n=-\infty}^\infty \exp(-\pi t\sum_{j=1}^n x_j^2)= (1 + 2\sum_{x=1}^\infty \exp(-\pi x^2 t))^n -1.
	\end{align*}
	For $t>0$, 
	$$ \sum_{x=1}^\infty \exp(-\pi x^2 t) < \int_{0}^\infty \exp(-\pi x^2) = \frac{1}{\sqrt{2t}}.$$
	Therefore
	$$ \lim_{t\rar \infty}|\Theta(\bfz,u+itI_n) -1| \leq \lim_{t\rar\infty}(1-\frac{2}{\sqrt{2t}})^n -1 = 0 .$$
\end{proof}

For $\bfx\in \ZZ^n$, and $L\subset \ZZ^n$ a sublattice, not necessarily of full rank, put
$$ \Theta_{{\bf x},L}(\bfz,\tau) = \sum_{\bfy\in {\bfx}+L} e(f_{\bfz,\tau}(\bfy)).$$
If $\rank(L)=r\leq n$, there exists $A\in M_{n\times r}(\ZZ)$ of rank $r$ such that 
$$ \ZZ^r \rar L,\ \ \ {\bf x} \mapsto A {\bf x}$$
is a bijection.

\begin{lemma}\label{parthet} $L=A \ZZ^r$ be a lattice of rank $r$, with $A\in M_{n\times r}(\ZZ)$ having full rank, as above. Then
	\begin{itemize}
		\item[(a)] Writing $\Theta^{(r)}$ for the Riemann theta function on $\CC^r\times \frakH_r$, 
		$$\Theta_{{\bf x},L}(\bfz,\tau) = e(f_{\bfz,\tau}({\bf x})) \Theta^{(r)}({}^t A (\bfz+\tau {\bf x}), {}^tA\tau A).$$
		\item[(b)] If $L$ has rank $n$, so that $A\in M_n(A)$ and $\det(A)\neq 0$,
		$$ \Theta_{{\bfx},L}(\bfz,\tau) = |\det(A)|^{-1} \det(-i\tau)^{-\frac{1}{2}} \exp(-\pi i {}^t \bfz \tau^{-1} \bfz) \Theta(A^{-1}(\tau^{-1}\bfz+{\bfx}), -({}^t A\tau A)^{-1}).$$  
	\end{itemize}
\end{lemma}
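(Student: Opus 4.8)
The plan is to reduce both parts to direct manipulations of the quadratic form $f_{\bfz,\tau}$ of (\ref{fzt}), with part (b) requiring in addition a single application of the transformation formula (\ref{RFE}).

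For part (a), I would parametrize the coset $\bfx+L$ by $\bfx+A\mathbf{m}$ as $\mathbf{m}$ ranges over $\ZZ^r$, using that $\mathbf{m}\mapsto A\mathbf{m}$ is a bijection $\ZZ^r\to L$, so that
\[
\Theta_{\bfx,L}(\bfz,\tau) = \sum_{\mathbf{m}\in\ZZ^r} e\!\left(f_{\bfz,\tau}(\bfx+A\mathbf{m})\right).
\]
Expanding $f_{\bfz,\tau}(\bfx+A\mathbf{m})$ and using the symmetry of $\tau$ to merge the two cross terms, the quadratic form splits as
\[
f_{\bfz,\tau}(\bfx+A\mathbf{m}) = f_{\bfz,\tau}(\bfx) + \tfrac{1}{2}{}^t\mathbf{m}\,({}^tA\tau A)\,\mathbf{m} + {}^t\mathbf{m}\,{}^tA(\bfz+\tau\bfx),
\]
and the last two terms are precisely $f_{\mathbf{w},\sigma}(\mathbf{m})$ with $\sigma={}^tA\tau A$ and $\mathbf{w}={}^tA(\bfz+\tau\bfx)$. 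Pulling the constant $e(f_{\bfz,\tau}(\bfx))$ out of the sum leaves $\Theta^{(r)}(\mathbf{w},\sigma)$, which is (a). I would note in passing that $\sigma\in\frakH_r$: since $A$ has full column rank, ${}^tA(\Im\tau)A$ is positive definite, so $\Theta^{(r)}$ is evaluated inside its domain.

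For part (b) I would specialize (a) to $r=n$ (so $A\in M_n(\ZZ)$ is non-singular and $A^{-1},\tau^{-1},\sigma^{-1}$ all make sense) and apply the transformation formula (\ref{RFE}), read in the inverse direction, to $\Theta(\mathbf{w},\sigma)$:
\[
\Theta(\mathbf{w},\sigma) = \det(-i\sigma)^{-1/2}\exp\!\left(-i\pi\,{}^t\mathbf{w}\sigma^{-1}\mathbf{w}\right)\Theta(\sigma^{-1}\mathbf{w},-\sigma^{-1}).
\]
From $\sigma^{-1}=A^{-1}\tau^{-1}\,{}^tA^{-1}$ one computes $\sigma^{-1}\mathbf{w}=A^{-1}(\tau^{-1}\bfz+\bfx)$ and $-\sigma^{-1}=-({}^tA\tau A)^{-1}$, which match the arguments in the statement, so it only remains to reconcile the prefactors.

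The main obstacle — really the only point needing care — is the branch of the square root under $\tau\mapsto{}^tA\tau A$. Both $\det(-i\,{}^tA\tau A)^{1/2}$ in the canonical branch and $|\det A|\,\det(-i\tau)^{1/2}$ are holomorphic on the simply connected $\frakH_n$ with equal squares $(\det A)^2\det(-i\tau)$, and they agree on the purely imaginary locus $\tau=itI_n$ (where both are positive reals), hence agree everywhere; this gives $\det(-i\sigma)^{-1/2}=|\det A|^{-1}\det(-i\tau)^{-1/2}$, supplying the factor $|\det A|^{-1}\det(-i\tau)^{-1/2}$. The rest is routine exponential bookkeeping: one finds ${}^t\mathbf{w}\sigma^{-1}\mathbf{w}={}^t\bfz\tau^{-1}\bfz+2\,{}^t\bfx\bfz+{}^t\bfx\tau\bfx$, and combining $\exp(-i\pi\,{}^t\mathbf{w}\sigma^{-1}\mathbf{w})$ with $e(f_{\bfz,\tau}(\bfx))=\exp(\pi i\,{}^t\bfx\tau\bfx+2\pi i\,{}^t\bfx\bfz)$ causes the $\bfx$-dependent terms to cancel, leaving exactly $\exp(-\pi i\,{}^t\bfz\tau^{-1}\bfz)$ and hence the stated identity (b).
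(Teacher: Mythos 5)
Your proof is correct and follows essentially the same route as the paper: split the quadratic form over the coset $\bfx+A\ZZ^r$ for (a), then apply the transformation formula (\ref{RFE}) to $\Theta({}^tA(\bfz+\tau\bfx),{}^tA\tau A)$ and verify the identities $\sigma^{-1}\mathbf{w}=A^{-1}(\tau^{-1}\bfz+\bfx)$, ${}^t\mathbf{w}\sigma^{-1}\mathbf{w}={}^t\bfz\tau^{-1}\bfz+2f_{\bfz,\tau}(\bfx)$, and $\det(-i\sigma)=(\det A)^2\det(-i\tau)$ for (b). Your explicit checks that ${}^tA\tau A\in\frakH_r$ and that the branch identity $\det(-i\,{}^tA\tau A)^{1/2}=|\det A|\det(-i\tau)^{1/2}$ holds (by comparison on the purely imaginary locus of the simply connected $\frakH_n$) are points the paper leaves implicit, but they are refinements rather than a different argument.
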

\begin{proof}
	Part $(a)$ follows immediately from
	$$ f_{z,\tau}({\bfx}+A\bfy) = f_{z,\tau}({\bf x}) + f_{z,\tau}(A\bfy) + {}^t \bfx A \bfy = f_{z,\tau}({\bf x}) + f_{{}^tA\bfx, {}^t A \tau A}({\bf y}).$$
	For $(b)$, we put
	$$z_0 = {}^tA(z+\tau {\bf x}),\ \ \tau_0 = {}^tA \tau A.$$
	and use the transformation formula  (\ref{RFE}) to write
	$$ \Theta({}^t A(\bfz + \tau {\bf x}),{}^t A \tau A)= \det(-i\tau_0)^{-\frac{1}{2}} \exp(-i \pi {}^t \bfz_0 \tau_0^{-1} \bfz_0) \Theta(\tau_0^{-1}\bfz_0,-\tau_0^{-1}).$$
	Then $(b)$ follows from $(a)$ after checking that 
	$$ \det(-i\tau_0) = \det(A)^2 \det(-i\tau),\ \ \ {}^t \bfz_0 \tau_0^{-1}\bfz_0 = {}^t \bfz \tau^{-1} \bfz + 2f_{\bfz,\tau}({\bfx}),\ \ \ \tau_0^{-1}\bfz_0 = A^{-1}(\tau^{-1}\bfz+{\bfx}),$$
	and noting $e(f_{\bfz,\tau}({\bfx}))$ cancels with the $\exp(-\pi i 2f_{\bfz,t}({\bfx}))$ factor coming from ${}^t \bfz_0 \tau_0^{-1} \bfz_0$.
\end{proof}
Since $f_{\bfz,\tau}({\bf x})$ depends linearly on $(\bfz,\tau)$, if $\bfz_1,\bfz_2\in \CC^n$, and $\tau_1,\tau_2\in M_n(\CC)$ with $\tau_1+\tau_2\in \frakH_n$, we have
$$ \Theta(\bfz_1+\bfz_2,\tau_1+\tau_2) = \sum_{{\bf x}\in \ZZ^n}e(f_{\bfz_1,\tau_1}({\bf x}))e(f_{\bfz_2,\tau_2}({\bf x})).$$

For $(\frakz,\frakt)\in \CC^n\times \Sym_n(\QQ)$, denote by $L_{\frakz,\frakt}$ the set of ${\bf y}\in \ZZ^n$ such that
\begin{align}\label{Lzt} f_{\frakz,\frakt}({\bf y}) + {}^t {\bf x} \frakt {\bf y}\in \ZZ,\ \ \text{ for all }{\bf x}\in \ZZ^n.\end{align}
Recall that an integral symmetric matrix is called \textit{even} if its diagonal entries are even.
\begin{lemma}\label{intlem}
	Suppose that $(\bfz,\tau)\in \CC^n\times\Sym_n(\ZZ)$. Then $L_{\bfz,\tau}=\ZZ^n$ if and only $\tau+\diag(2\bfz)$ is an even integral matrix.
\end{lemma}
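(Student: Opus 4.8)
The plan is to unwind the definition of $L_{\bfz,\tau}$ and reduce membership to a condition on the diagonal of $\tau$ and the entries of $\bfz$, one coordinate at a time. The crucial simplification is that here $\tau$ is assumed \emph{integral}: since $\tau\in\Sym_n(\ZZ)$, the pairing ${}^t\bfx\,\tau\,\bfy$ lies in $\ZZ$ for every $\bfx,\bfy\in\ZZ^n$. Consequently the defining condition (\ref{Lzt}) for $\bfy\in L_{\bfz,\tau}$, namely $f_{\bfz,\tau}(\bfy)+{}^t\bfx\,\tau\,\bfy\in\ZZ$ for all $\bfx$, collapses to the single requirement $f_{\bfz,\tau}(\bfy)\in\ZZ$. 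Hence $L_{\bfz,\tau}=\ZZ^n$ if and only if $f_{\bfz,\tau}(\bfy)\in\ZZ$ for every $\bfy\in\ZZ^n$, and it is this reformulation that I would analyze.

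Next I would expand $f_{\bfz,\tau}(\bfy)=\frac12\,{}^t\bfy\,\tau\,\bfy+{}^t\bfy\,\bfz$ into diagonal and off-diagonal parts. Writing $\tau=(\tau_{ij})$ and $\bfz=(z_1,\dots,z_n)$, the off-diagonal contribution $\sum_{i<j}\tau_{ij}y_iy_j$ is automatically an integer, so the integrality of $f_{\bfz,\tau}(\bfy)$ is equivalent to
$$\sum_{i=1}^n\Big(\tfrac12\tau_{ii}y_i^2+z_iy_i\Big)\in\ZZ\qquad\text{for all }\bfy\in\ZZ^n.$$
Taking $\bfy=e_i$ shows that this forces $\tfrac12\tau_{ii}+z_i\in\ZZ$ for each $i$; in particular each $z_i$ must then be real. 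This establishes necessity.

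For sufficiency I would show that the per-coordinate terms are already integral once $\tfrac12\tau_{ii}+z_i\in\ZZ$. Setting $z_i=k_i-\tfrac12\tau_{ii}$ with $k_i\in\ZZ$, one rewrites $\tfrac12\tau_{ii}y_i^2+z_iy_i=\tfrac12\tau_{ii}\,y_i(y_i-1)+k_iy_i$, which lies in $\ZZ$ because $y_i(y_i-1)$ is even and $\tau_{ii}\in\ZZ$. Summing over $i$ then gives $f_{\bfz,\tau}(\bfy)\in\ZZ$ for all $\bfy$, so $L_{\bfz,\tau}=\ZZ^n$.

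Finally I would translate the condition $\tfrac12\tau_{ii}+z_i\in\ZZ$ (all $i$) into the language of the statement: it is equivalent to $\tau_{ii}+2z_i\in2\ZZ$, i.e.\ the diagonal entries of $\tau+\diag(2\bfz)$ are even integers. Since the off-diagonal entries of $\tau+\diag(2\bfz)$ are just the integers $\tau_{ij}$, this says precisely that $\tau+\diag(2\bfz)$ is an even integral matrix, completing the equivalence. I do not anticipate a genuine obstacle here; the only points requiring care are the reduction in the first paragraph, where one must invoke the integrality of $\tau$ to discard the ${}^t\bfx\,\tau\,\bfy$ term, and the elementary but essential observation that $y(y-1)$ is even, which is exactly what makes the diagonal parity (the ``even'' condition) the correct hypothesis.
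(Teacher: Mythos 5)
Your proposal is correct and follows essentially the same route as the paper: both discard the pairing term ${}^t\bfx\,\tau\,\bfy$ using the integrality of $\tau$, reducing the condition $L_{\bfz,\tau}=\ZZ^n$ to the integrality of $f_{\bfz,\tau}$ on all of $\ZZ^n$, and then reduce that to the diagonal condition on $\tau+\diag(2\bfz)$ via the basis vectors $e_i$. The only cosmetic difference is that the paper passes from the $e_i$ to general $\bfy$ using the identity $f_{\bfz,\tau}(\bfx+\bfy)=f_{\bfz,\tau}(\bfx)+f_{\bfz,\tau}(\bfy)+{}^t\bfx\tau\bfy$, whereas you expand coordinate-wise and invoke the evenness of $y_i(y_i-1)$ --- the same arithmetic fact in a different guise.
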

\begin{proof}
	Since $\tau\in \Sym_n(\ZZ)$, we have ${}^t\bfx \tau \bfx\in \ZZ$ for any $\bfx\in \ZZ^n$. Then $f_{\bfz,\tau}(\bfx)\in \ZZ$ only if ${}^t\bfx \bfz \in \frac{1}{2}\ZZ$ for all $\bfx\in\ZZ^n$, which is to say $2\bfz\in \ZZ^n$. Now 
	$$ f_{\bfz,\tau}(\bfx+\bfy)= f_{\bfz,\tau}(\bfx) + f_{\bfz,\tau}(\bfy) + {}^t\bfx\frakt \bfy,\ \ \ f_{\bfz,\tau}(-\bfx) = f_{\bfz,\tau}(\bfx) - {}^t\bfx (2\bfz).$$
	It follows that, assuming $2\bfz\in \ZZ^n$, $f_{\bfz,\tau}(\bfx)\in\ZZ$ for all $\bfx\in \ZZ^n$ if and only if the same holds for $\bfx=e_1,\cdots, e_n$. On the other hand,
	$$ f_{\bfz,\tau}(e_i) = {}^te_i T e_i,\ \ \text{ where }T=\frakt + \diag(2\bfz),$$
	so $L_{\bfz,\tau}=\ZZ^n$ if and only if the integral matrix $T$ is even. Note that, since $\tau\in\Sym_n(\ZZ)$, if $T$ is even $2\bfz\in \ZZ^n$ automatically.
\end{proof}

Suppose $\tau\in \Sym_n(\QQ)$. Then we may choose $U$, $V \in \GL_n(\ZZ)$ such that $\tau=UDV^{-1}$, with 
$$D=\diag(\frac{p_1}{q_1},\cdots, \frac{p_n}{q_n}),$$
and $p_i,q_i$ integers such that $q_i>0$ and $\gcd(p_i,q_i)=1$. Putting
$$ P = \diag(p_1,\cdots, p_n),\ \ \ Q = \diag(q_1,\dots, q_n),$$
and
\begin{align}\label{PQ}A=UP,\ \ B = VQ,\end{align}
we have
\begin{align}\label{AB} \tau = AB^{-1}.\end{align}

\begin{defn}For $\tau\in \Sym_n(\QQ)$, we say $\tau=AB^{-1}$ is in \textit{reduced form}, if $A=UP$, $B=VQ$, with $U$, $V$, $P$, $Q$ as above.
\end{defn}

Thus any $\tau\in \Sym_n(\QQ)$ has a (non-unique) reduced form.

\begin{prop}\label{Lprop}Let $\tau\in \Sym_n(\QQ)$, $\bfz\in \CC^n$, and write $\tau=AB^{-1}$ in reduced form. 	
	\begin{itemize}
		\item[(a)] $\bfy,\ \tau \bfy\in\ZZ^n$ if and only if $\bfy\in B\ZZ^n$.
		\item[(b)] $L_{\bfz,\tau}=\{\bfy\in B\ZZ^n: f_{\bfz,\tau}(\bfy)\in \ZZ\}.$
		\item[(c)] $L_{\bfz,\tau}=B\ZZ^n$ if and only if ${}^tBA + 2\diag({}^tB\bfz)$ is an even integral matrix.
	\end{itemize}
\end{prop}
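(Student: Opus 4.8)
The plan is to prove the three parts in order, deducing (b) from (a), and (c) from (a) together with Lemma \ref{intlem}. Throughout I use that $A=UP$ and $B=VQ$ are integral matrices with $U,V\in\GL_n(\ZZ)$, $P=\diag(p_i)$, $Q=\diag(q_i)$, $q_i>0$, $\gcd(p_i,q_i)=1$, and that $\tau=AB^{-1}$, so $\tau B=A$.

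For part (a), I would introduce the a priori rational vector $\bfw=B^{-1}\bfy=Q^{-1}V^{-1}\bfy$, so that $\bfy=B\bfw=VQ\bfw$ and $\tau\bfy=AB^{-1}B\bfw=A\bfw=UP\bfw$. Since $U,V\in\GL_n(\ZZ)$, the condition $\bfy\in\ZZ^n$ is equivalent to $Q\bfw\in\ZZ^n$, and $\tau\bfy\in\ZZ^n$ is equivalent to $P\bfw\in\ZZ^n$; that is, to $q_iw_i\in\ZZ$ and $p_iw_i\in\ZZ$ for every $i$. Because $\gcd(p_i,q_i)=1$, a B\'ezout relation $r_ip_i+s_iq_i=1$ gives $w_i=r_i(p_iw_i)+s_i(q_iw_i)\in\ZZ$, whence $\bfw\in\ZZ^n$ and $\bfy\in B\ZZ^n$. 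The converse is immediate, since $\bfy=B\bfw$ gives $\bfy=VQ\bfw\in\ZZ^n$ and $\tau\bfy=UP\bfw\in\ZZ^n$ when $\bfw\in\ZZ^n$.

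For part (b), I would read off the defining condition (\ref{Lzt}): taking $\bfx=0$ forces $f_{\bfz,\tau}(\bfy)\in\ZZ$, and taking $\bfx=e_i$ forces $(\tau\bfy)_i\in\ZZ$ for each $i$, so $\tau\bfy\in\ZZ^n$; combined with $\bfy\in\ZZ^n$, part (a) yields $\bfy\in B\ZZ^n$. Conversely, if $\bfy\in B\ZZ^n$ and $f_{\bfz,\tau}(\bfy)\in\ZZ$, then $\tau\bfy\in\ZZ^n$ by (a), so ${}^t\bfx\tau\bfy\in\ZZ$ for all $\bfx\in\ZZ^n$, and therefore $\bfy\in L_{\bfz,\tau}$. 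For part (c), part (b) shows that $L_{\bfz,\tau}=B\ZZ^n$ exactly when $f_{\bfz,\tau}(B\bfw)\in\ZZ$ for all $\bfw\in\ZZ^n$. Substituting $\bfy=B\bfw$ and using $\tau B=A$, I get
\[ f_{\bfz,\tau}(B\bfw)=\tfrac{1}{2}{}^t\bfw\,({}^tB\tau B)\,\bfw+{}^t\bfw\,({}^tB\bfz)=\tfrac{1}{2}{}^t\bfw\,S\,\bfw+{}^t\bfw\,\bfc, \]
where $S={}^tBA$ and $\bfc={}^tB\bfz$. The matrix $S$ is integral since $A$ and $B$ are, and symmetric since $S={}^tB\tau B$ with $\tau$ symmetric; hence $(\bfc,S)\in\CC^n\times\Sym_n(\ZZ)$ and $f_{\bfz,\tau}(B\bfw)=f_{\bfc,S}(\bfw)$. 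Thus the condition is precisely $L_{\bfc,S}=\ZZ^n$, and Lemma \ref{intlem} identifies this with the evenness and integrality of $S+\diag(2\bfc)={}^tBA+2\diag({}^tB\bfz)$.

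I expect the only points requiring genuine care to be the B\'ezout step in part (a) — the heart of why the reduced form is the right object — and, in part (c), the verification that $S={}^tBA$ is symmetric and integral, so that Lemma \ref{intlem} actually applies. Everything else is bookkeeping once the substitution $\bfy=B\bfw$ is set up.
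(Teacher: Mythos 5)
Your proof is correct and follows essentially the same route as the paper's: part (a) reduces to the coprimality of $p_i$ and $q_i$ after stripping off $U$ and $V$ (your B\'ezout identity is just a rephrasing of the paper's observation that $PQ^{-1}\bfy_0\in\ZZ^n$ iff $Q^{-1}\bfy_0\in\ZZ^n$), part (b) is obtained by specializing and subtracting in the defining condition, and part (c) substitutes $\bfy=B\bfw$ and invokes Lemma \ref{intlem} with $\tau_0={}^tBA$, $\bfz_0={}^tB\bfz$ exactly as in the paper. No gaps.
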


\begin{proof}
	With $U$, $V$, $P$, $Q$ as in (\ref{PQ}), we have $\tau= UPQ^{-1} V^{-1}$. Since $U \in \GL_n(\ZZ)$, $\tau \bfy\in \ZZ^n$ if and only if $PQ^{-1}V^{-1}\bfy\in \ZZ^n$. Since $V\in \GL_n(\ZZ)$, $\bfy\in \ZZ^n$ if and only if $\bfy_0=V^{-1}\bfy\in \ZZ^n$. Therefore $\bfy,\tau \bfy\in \ZZ^n$ if and only if $\bfy=V\bfy_0$ for some $\bfy_0\in \ZZ^n$ such that $PQ^{-1} \bfy_0\in \ZZ^n$. Since $P$, $Q$ are integral diagonal matrices with $\gcd(p_i,q_i)=1$, 
	$$PQ^{-1}\bfy_0\in \ZZ^n \iff Q^{-1} \bfy_0\in \ZZ^n \iff  \bfy \in B\ZZ^n,$$
	which proves $(a)$.
	
	Suppose $\bfy\in L_{\bfz,\tau}$. Setting $\bfx=0$ in (\ref{Lzt}) we obtain $f_{\bfz,\tau}(\bfy)\in \ZZ$. Subtracting from $(\ref{Lzt})$ for an arbitrary $\bfx$ we obtain ${}^t\bfx \frakt \bfy\in\ZZ$ for all $\bfx$, which is to say $\tau \bfy\in \ZZ^n$. Conversely, if $f_{\bfz,\tau}(\bfy)\in \ZZ$ and $\bfy, \tau \bfy\in \ZZ^n$, we have $\bfy\in L_{\bfz,\tau}$. Then $(b)$ follows from $(a)$.
	
	It follows from $(b)$ that $L_{\bfz,\tau}=B\ZZ^n$ if and only if $f_{\bfz,\tau}(B\bfy_0)\in\ZZ$ for all $\bfy_0\in \ZZ$. Now
	$$f_{\bfz,\tau}(B\bfy_0) = \frac{1}{2} {}^t \bfy_0 {}^tB A\bfy_0 + {}^t\bfy_0 {}^tB \bfz = f_{\bfz_0,\tau_0}(\bfy_0),\ \ \ \text{ where }\bfz_0 = {}^tB \bfz,\ \ \tau_0={}^tB A.$$
	Note $\tau\in \Sym_n(\QQ)$ implies ${}^tB A$ is integral and symmetric. Then using Lemma \ref{intlem}, $f_{\bfz,\tau}(B\ZZ^n) \subset \ZZ$ if and only if $\tau_0 + 2\diag(\bfz_0) = {}^tBA + 2\diag({}^t B\bfz)$ is integral and even.
\end{proof}

Now assume that $(\frakz,\frakt)\in \frac{1}{2}\ZZ^n\times \Sym_n(\QQ)$, and that $\frakt=AB^{-1}$ is in reduced form, with ${}^tB A + 2\diag(B\frakz)$ an even integral matrix, so that $L_{\frakz,\frakt}=B\ZZ^n$ by Proposition \ref{Lprop}(c). For $(\bfz,\tau)\in \CC^n\times \frakH_n$, we have
$$ \Theta(\frakz+\bfz,\frakt+t) = \sum_{\bfx\in \ZZ^n} e(f_{\frakz,\frakt}(\bfx))e(f_{\bfz,\tau}(\bfx)) = \sum_{{\bf x}\in \ZZ^n(\mod B)} e(f_{\frakz,\frakt}({\bf x})) \Theta_{{\bf x},L_{\frakz,\frakt}}(z,\tau).$$
Then using Lemma \ref{parthet}(b), 
\begin{align}\label{E1}\Theta(\frakz+\bfz,\frakt+\tau) = |\det(B)|^{-1} \alpha(\bfz,\tau)^{-1}\sum_{{\bf x}\in \ZZ^n(\mod B)} e(f_{\frakz,\frakt}({\bf x}) \Theta(B^{-1}(\tau^{-1}\bfz + {\bf x}), -({}^t B \tau B)^{-1}),\end{align}
where $(\alpha(\bfz,\tau)$ is as in (\ref{alpha}). Applying (\ref{RFE}), we have
\begin{align}\label{E2}\Theta(\frakz+\bfz, \frakt+\tau) = \alpha(\frakz+\bfz,\frakt+\tau)^{-1}\Theta((\frakt+\tau)^{-1}(\frakz+\bfz), -(\frakt+\tau)^{-1}).\end{align}
Now we write
\begin{align}\label{z1t1}(\frakt+\tau)^{-1}(\frakz+\bfz) = \frakz_1+\bfz_1,\ \ \ -(\frakt+\tau)^{-1}= \frakt_1+\tau_1,\end{align}
where
\begin{align}\label{z1t2} \frakz_1 = \frakt^{-1}\frakz,\ \ \ \frakt_1 = -\frakt^{-1}.\end{align}
Since $\tau = AB^{-1}$, we have $\tau_1 = -BA^{-1}$. Now, in general
$$ {}^tA B + 2\diag({}^t A \frakz_1) = {}^t(B {}^t A + 2\diag({}^t B \frakz)) = BA^t + 2\diag({}^t B \frakz).$$
Since the right-hand side is by assumption an even integral matrix, so is the left-hand side, hence $L_{\frakz_1,\frakt_1} = A\ZZ^n$ by Proposition \ref{Lprop}(c). Therefore as in \ref{E1}, we have
\begin{align}\label{E3} \Theta(\frakz_1+\bfz_1, \frakt_1+\tau_1) = |\det(A)|^{-1} \alpha(\bfz_1,\tau_1)^{-1} \sum_{{\bf x}_1\in \ZZ^n(\mod A)} e(f_{\frakz_1,\frakt_1}({\bf x}_1)) \Theta(A^{-1}(\tau_1^{-1}\bfz_1 + {\bf x}_1),-({}^t A \tau_1 A)^{-1}).\end{align}
Combining with (\ref{E1}) and (\ref{E2}), we arrive at the preliminary form of our main identity:
\begin{align} \begin{split}\label{prelimid}&\sum_{{\bf x}\in \ZZ^n(\mod B)} e(f_{\frakz,\frakt}({\bf x})) \Theta(B^{-1}(\tau^{-1}\bfz+{\bf x}), -({}^t B\tau B)^{-1}) \\
= F(\frakz,\frakt,\bfz,\tau) &\sum_{{\bf x}_1\in \ZZ^n(\mod A)} e(f_{\frakz_1,\frakt_1}({\bf x}_1)) \Theta(A^{-1}(\tau_1^{-1}\bfz_1 + {\bf x}_1), -({}^t A\tau_1 A)^{-1}),
\end{split}\end{align} 
where 
\begin{align}\label{factor}F(\frakz,\frakt,\bfz,\tau)= \frac{|\det(B)|}{|\det(A)|} \alpha(z,\tau) \alpha(\frakz+\bfz,\frakt+\tau)^{-1} \alpha(\bfz_1,\tau_1)^{-1}.\end{align}

\begin{lemma}\label{idlem}With $\tau_1$, $\bfz_1$ as in (\ref{z1t2}), the following identities holds:
	$$\begin{array}{cll}
	(a)  & \tau_1 = -(\frakt+\tau)^{-1}+\frakt^{-1} = (\frakt+\tau)^{-1}\tau \frakt^{-1}= \frakt^{-1}\tau (\frakt+\tau)^{-1}, \\
	(b)  &\bfz_1 = (\frakt+\tau)^{-1}(\bfz-\tau \frakt^{-1}\frakz).\\
	(c) & \frakt = \tau (\frakt+\tau)^{-1} \tau_1^{-1} \\
	(d) &{}^t \bfz \tau^{-1}\bfz +{}^t \frakz \frakt^{-1} \frakz = {}^t(\frakz+\bfz)(\frakt+\tau)^{-1}(\frakz+\bfz)+{}^t \bfz_1 \tau_1^{-1} \bfz_1.\\
	(e) & \alpha(\bfz,\tau)\alpha(\frakz+\bfz,\frakt+\tau)^{-1}\alpha(\bfz_1,\tau_1)^{-1} = |\det(\frakt)|^{\frac{1}{2}} e^{i\pi\sigma_{\frakt}/4}\exp(-i\pi{}^t \frakz \frakt^{-1}\frakz).
	\end{array}$$
\end{lemma}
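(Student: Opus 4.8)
The plan is to treat (a)--(c) as elementary consequences of the resolvent identity and to reduce (e) to (d) together with a branch computation, isolating the sign of the square root as the only delicate point. For (a), the first equality is the definition: (\ref{z1t1}) gives $\tau_1 = -(\frakt+\tau)^{-1}-\frakt_1 = \frakt^{-1}-(\frakt+\tau)^{-1}$ since $\frakt_1=-\frakt^{-1}$. The other two forms come from $\frakt^{-1}-(\frakt+\tau)^{-1} = \frakt^{-1}((\frakt+\tau)-\frakt)(\frakt+\tau)^{-1} = \frakt^{-1}\tau(\frakt+\tau)^{-1}$, and by factoring on the opposite side, $(\frakt+\tau)^{-1}\tau\frakt^{-1}$. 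Part (b) follows by solving $\bfz_1 = (\frakt+\tau)^{-1}(\frakz+\bfz)-\frakt^{-1}\frakz$ and substituting $(\frakt+\tau)^{-1}-\frakt^{-1} = -(\frakt+\tau)^{-1}\tau\frakt^{-1}$ from (a). For (c), inverting $\tau_1 = \frakt^{-1}\tau(\frakt+\tau)^{-1}$ gives $\tau_1^{-1} = (\frakt+\tau)\tau^{-1}\frakt$, so that $\tau(\frakt+\tau)^{-1}\tau_1^{-1} = \tau\tau^{-1}\frakt = \frakt$ telescopes.

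For (d) I would compare both sides as quadratic forms in the single vector $(\frakz,\bfz)$. The left side has block-diagonal matrix $\diag(\frakt^{-1},\tau^{-1})$, and every matrix in sight is symmetric because $\frakt$, $\tau$ are. The term ${}^t(\frakz+\bfz)(\frakt+\tau)^{-1}(\frakz+\bfz)$ puts $S:=(\frakt+\tau)^{-1}$ into each of the four blocks, while (b) writes $\bfz_1 = S\bfz - R\frakz$ with $R := \frakt^{-1}-S$. The decisive observation is that (a) identifies $R$ with $\tau_1$, so $R$ and $N := \tau_1^{-1}$ are mutually inverse; consequently, in ${}^t\bfz_1 N\bfz_1$ every occurrence of $RN$ and $NR$ collapses to the identity. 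The cross blocks then read $S - RNS = 0$, the $\frakz\frakz$-block becomes $S + RNR = S + R = \frakt^{-1}$, and the $\bfz\bfz$-block becomes $S + SNS = \tau^{-1}$ after the single simplification $SN = \tau^{-1}\frakt$. This is the main computational content, but the inverse relation $R = \tau_1 = N^{-1}$ keeps it routine rather than messy.

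Part (e) is where the real work lies; it separates into exponential and determinant factors. Multiplying the exponentials of the three $\alpha$'s produces $\exp(i\pi({}^t\bfz\tau^{-1}\bfz - {}^t(\frakz+\bfz)(\frakt+\tau)^{-1}(\frakz+\bfz) - {}^t\bfz_1\tau_1^{-1}\bfz_1))$, and the bracket is exactly $-{}^t\frakz\frakt^{-1}\frakz$ by (d), matching the target exponential. It remains to show that $g(\tau) := \det(-i\tau)^{1/2}\det(-i(\frakt+\tau))^{-1/2}\det(-i\tau_1)^{-1/2}$ equals $|\det\frakt|^{1/2}e^{i\pi\sigma_\frakt/4}$. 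From $\det\tau_1 = \det\tau/(\det(\frakt+\tau)\det\frakt)$, a consequence of (a), one computes $g(\tau)^2 = i^n\det\frakt$, which is constant; since each branch is holomorphic and $\frakH_n$ is connected, $g$ is itself a constant, and a short eigenvalue count shows $i^n\det\frakt$ already equals the square of the claimed value. The main obstacle is pinning the sign of this constant, since the three branches cannot be combined naively. I would evaluate along $\tau = isI_n$ as $s\to\infty$: then $\det(-i\tau)^{1/2}/\det(-i(\frakt+\tau))^{1/2}\to 1$, while $\tau_1\to\frakt^{-1}$, so that $\det(-i\tau_1)^{-1/2}\to|\det\frakt|^{1/2}e^{i\pi\sigma_\frakt/4}$ by Lemma \ref{siglem} applied to $\frakt^{-1}$ (using $\sigma_{\frakt^{-1}}=\sigma_\frakt$ and $|\det\frakt^{-1}|=|\det\frakt|^{-1}$, together with the path-independence of the continuous extension of $\det(-i\,\cdot\,)^{1/2}$ to the nonsingular real point $\frakt^{-1}$). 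This boundary limit fixes the sign and completes (e).
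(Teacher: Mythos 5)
Your proof is correct and follows essentially the same route as the paper: (a)--(c) by direct manipulation of the defining equations, (d) by elementary algebra (you organize it as a block quadratic-form comparison in $(\frakz,\bfz)$ where the paper simply expands against $\frakz_1+\bfz_1$, but the content is identical), and (e) by splitting into the exponential factor, handled by (d), and the determinant factor, handled by (c) and Lemma \ref{siglem}. If anything, your limit argument along $\tau=isI_n$, $s\to\infty$, pinning down the sign of the square root via connectedness of $\frakH_n$ is more explicit than the paper's terse appeal to ``(c) and Lemma \ref{siglem}.''
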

\begin{proof}
	Parts $(a)$, $(b)$ follow immediately from the defining equations (\ref{z1t1}), and $(c)$ follows from $(a)$.
	For $(d)$, first we use (\ref{z1t1}) to write
	\begin{align*} {}^t(\frakz+\bfz)(\frakt+\tau)^{-1}(\frakz+\bfz) = {}^t(\frakz+\bfz)(\frakz_1+\bfz_1) = ({}^t \frakz + {}^t \bfz)(\frakt^{-1}\frakz+\bfz_1)={}^t\frakz\frakt^{-1}\frakz + {}^t\frakz \bfz_1 + {}^t \bfz \frakt^{-1}\frakz + {}^t \bfz \bfz_1.\end{align*}
	Then using $(a)$ and $(b)$ we also write
	\begin{align*}{}^t \bfz_1 \tau_1^{-1} \bfz_1 = ({}^t \bfz - {}^t \frakz \frakt^{-1} \tau)\tau^{-1}\frakt \bfz_1 = {}^t \bfz \tau^{-1}\frakt \bfz_1 - {}^t \frakz \bfz_1.\end{align*}
	Summing the two relations above, we obtain
	$$ {}^t(\frakz+\bfz)(\frakt+\tau)^{-1}(\frakz+\bfz) + {}^t \bfz_1\tau_1^{-1} \bfz_1 = {}^t \frakz \frakt^{-1}\frakz + {}^t \bfz \tau^{-1}(\tau\frakt^{-1}\frakz + \tau \bfz_1 + \frakt \bfz_1) = {}^t \frakz \frakt^{-1}\frakz + {}^t \bfz \tau^{-1} \bfz,$$
	with the last equality using $(b)$. This proves $(d)$. 
	
	The left-hand side of $(e)$ is equal to the product $X\cdot Y$ where 
	$$X=\det(-i\tau)^{\frac{1}{2}}\det(-i(\frakt+\tau))^{-\frac{1}{2}} \det(-i\tau_1)^{-\frac{1}{2}}=\det(i\frakt)^{\frac{1}{2}}=|\det(\frakt)| e^{i\pi \sigma_{\frakt}/4}$$
	by $(c)$ and Lemma \ref{siglem}, and
	$$
	Y=\exp(-i\pi \left({}^t\bfz \tau^{-1}\bfz -{}^t(\frakz+\bfz)(\frakt+\tau)^{-1}(\frakz+\bfz)-{}^t \bfz_1 \tau_1^{-1} \bfz_1\right))=\exp(-i\pi {}^t \frakz \frakt^{-1}\frakz)
	$$
	by $(d)$. This proves $(e)$.
\end{proof}

It follows from part $(e)$ of the Lemma that 
\begin{align*}F(\frakz,\frakt,\bfz,\tau) = |\det(\frakt)|^{-1} |\det(\frakt)|^{\frac{1}{2}} e^{i\pi \sigma_{\frakt}/4} \exp(-i\pi {}^t\frakz \frakt^{-1} \frakz)=\det(-i\frakt)^{-\frac{1}{2}}\exp(-i\pi {}^t \frakz\frakt^{-1})=\alpha(\frakz,\frakt)^{-1}.\end{align*}

The main theorem is now as follows.

\begin{thm}\label{thmB}Let $\frakt\in \Sym_n(\QQ)$ be non-singular, write $\frakt=AB^{-1}$ in reduced form, and put $\frakn={}^tB A$. Let $\frakc\in \QQ^n$ be such that $\frakn+2\diag(\frakc)$ is integral and even. Then for all $(\bfz,\tau)\in \CC^n\times \frakH_n$,
	$$ \sum_{{\bf x}\in B^{-1}\ZZ^n(\mod \ZZ^n)}e(f_{\frakn,\frakc}({\bf x}))\Theta(\bfz+{\bf x},\tau) = \det(i\frakt)^{-\frac{1}{2}} e^{-i\pi{}^t(\frakc \frakn^{-1}\frakc)}\sum_{{\bf x}_1\in A^{-1}\ZZ^n(\mod \ZZ^n)} e(f_{-\frakn,\frakc}({\bf x}_1))\Theta(\bfz+{\bf x}_1 - \frakn^{-1},\tau-\frakn^{-1}).$$ 
\end{thm}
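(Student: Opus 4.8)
The plan is to read off the theorem from the preliminary identity~\eqref{prelimid} together with the evaluation $F=\alpha(\frakz,\frakt)^{-1}$ obtained just above it, after a single change of the analytic variables. Given $(\bfz,\tau)\in\CC^n\times\frakH_n$ as in the statement, set $\frakc={}^tB\frakz$ and feed into~\eqref{prelimid} the values $\tau_\ast=-{}^tB^{-1}\tau^{-1}B^{-1}$ and $\bfz_\ast=\tau_\ast B\bfz$ in place of its generic $(\bfz,\tau)$. Since $\tau\in\frakH_n$ gives $-\tau^{-1}\in\frakH_n$ and congruence by the real invertible $B^{-1}$ preserves $\frakH_n$, we have $\tau_\ast\in\frakH_n$, and $(\bfz,\tau)\mapsto(\bfz_\ast,\tau_\ast)$ is a bijection of $\CC^n\times\frakH_n$; so proving the identity for the substituted values proves it for all $(\bfz,\tau)$. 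The evenness hypothesis that $\frakn+2\diag(\frakc)$ be integral is precisely the condition of Proposition~\ref{Lprop}(c) yielding $L_{\frakz,\frakt}=B\ZZ^n$ and, as in the run-up to~\eqref{prelimid}, $L_{\frakz_1,\frakt_1}=A\ZZ^n$, so~\eqref{prelimid} is in force.

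For the left-hand side I reindex $\bfx\in\ZZ^n(\mod B)$ by $\bfy=B^{-1}\bfx\in B^{-1}\ZZ^n(\mod\ZZ^n)$. Because ${}^tB\frakt B={}^tBA=\frakn$ one has $f_{\frakz,\frakt}(B\bfy)=\tfrac12{}^t\bfy\frakn\bfy+{}^t\bfy\frakc$, which is the coefficient $e(f_{\frakn,\frakc}(\bfy))$ of the theorem. The substitution was engineered so that $-({}^tB\tau_\ast B)^{-1}=\tau$ and $B^{-1}\tau_\ast^{-1}\bfz_\ast=\bfz$, whence each term $\Theta(B^{-1}(\tau_\ast^{-1}\bfz_\ast+\bfx),-({}^tB\tau_\ast B)^{-1})$ becomes $\Theta(\bfz+\bfy,\tau)$. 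Thus the left-hand side of~\eqref{prelimid} is literally the left-hand side of the theorem, with no stray constant.

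The right-hand side carries the real work. Reindexing $\bfx_1\in\ZZ^n(\mod A)$ by $\bfy_1=A^{-1}\bfx_1$ and using the symmetry $\frakn={}^tBA={}^tAB$, one finds ${}^tA\frakt_1A=-{}^tAB=-\frakn$ and ${}^tA\frakz_1={}^tABA^{-1}\frakz={}^tB\frakz=\frakc$, so that $f_{\frakz_1,\frakt_1}(A\bfy_1)=-\tfrac12{}^t\bfy_1\frakn\bfy_1+{}^t\bfy_1\frakc=f_{-\frakn,\frakc}(\bfy_1)$, matching the coefficient in the statement (here $\frakt_1,\frakz_1,\tau_1,\bfz_1$ are as in~\eqref{z1t1}--\eqref{z1t2}). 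It remains to verify the two theta-argument identities
$$-({}^tA\tau_1A)^{-1}=\tau-\frakn^{-1},\qquad A^{-1}\tau_1^{-1}\bfz_1=\bfz-\frakn^{-1}\frakc,$$
which I expect to be the main obstacle. The decisive step is the factorization $\frakt+\tau_\ast={}^tB^{-1}(\frakn-\tau^{-1})B^{-1}$. Feeding it into Lemma~\ref{idlem}(a), written $\tau_1=\frakt^{-1}-(\frakt+\tau_\ast)^{-1}$, together with ${}^tA\frakt^{-1}A={}^tAB=\frakn$, collapses ${}^tA\tau_1A$ to $\frakn-\frakn(\frakn-\tau^{-1})^{-1}\frakn=-\frakn(\frakn-\tau^{-1})^{-1}\tau^{-1}$, whose negative inverse is $\tau(\frakn-\tau^{-1})\frakn^{-1}=\tau-\frakn^{-1}$; the second identity comes out the same way from Lemma~\ref{idlem}(b) after simplifying $\tau_1^{-1}\bfz_1=\frakt\tau_\ast^{-1}\bfz_\ast-\frakz$ and using $A^{-1}\frakt=B^{-1}$, $A^{-1}\frakz=\frakn^{-1}\frakc$.

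Finally, the prefactor is untouched by the substitution, since $F$ in~\eqref{factor} depends only on the arithmetic data $(\frakz,\frakt)$: by the line preceding the theorem $F=\alpha(\frakz,\frakt)^{-1}=\det(-i\frakt)^{-1/2}\exp(-i\pi\,{}^t\frakz\frakt^{-1}\frakz)$. Its exponent rewrites via $\frakt^{-1}=BA^{-1}$, $\frakn^{-1}=A^{-1}{}^tB^{-1}$ and $\frakc={}^tB\frakz$ as $-i\pi\,{}^t\frakz\frakt^{-1}\frakz=-i\pi\,{}^t\frakz BA^{-1}\frakz=-i\pi\,{}^t\frakc\frakn^{-1}\frakc$. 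Substituting the three matched pieces into~\eqref{prelimid} then yields the asserted identity for all $(\bfz,\tau)\in\CC^n\times\frakH_n$.
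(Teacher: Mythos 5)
Your proposal is correct and follows essentially the same route as the paper: specialize $\frakz={}^tB^{-1}\frakc$ so that (\ref{prelimid}) applies, reindex the two sums by $B^{-1}$ and $A^{-1}$, arrange the analytic variables so the theta arguments become $(\bfz+\bfx,\tau)$ and $(\bfz+\bfx_1-\frakn^{-1}\frakc,\tau-\frakn^{-1})$, and identify the prefactor with $\alpha(\frakz,\frakt)^{-1}$. The only difference is cosmetic — you substitute $\tau_\ast=-{}^tB^{-1}\tau^{-1}B^{-1}$ at the outset and verify $-({}^tA\tau_1A)^{-1}=\tau-\frakn^{-1}$ explicitly, whereas the paper substitutes $\bfz\mapsto\tau B\bfz$ and renames $\tau_B$ to $\tau$ at the end.
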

\begin{proof}
	Let $\frakz = {}^t B^{-1}\frakc$, so that $(\frakz,\frakt)$ satisfies the condition in Proposition \ref{Lprop}(c), and (\ref{prelimid}) holds. Replacing $\bfz$ by $\tau B \bfz$ and ${\bf x}\in \ZZ^n(\mod B)$ with $B^{-1}{\bf x}\in B\ZZ^n(\mod \ZZ^n)$, the left-hand side of (\ref{prelimid}) becomes
	$$ \sum_{{\bf x}\in B^{-1}\ZZ^n(\mod \ZZ^n)} e(f_{\frakz,\frakt}(B{\bf x}))\Theta(\bfz+{\bf x}), -({}^t B \tau B)^{-1}).$$
	
	Using Lemma \ref{idlem}, we have
	$$ \tau_1^{-1}\bfz_1 = \frakt \tau^{-1}(\bfz-\tau\frakt^{-1}\frakz)=\frakt \tau^{-1} z-\frakz \so A^{-1}(\tau^{-1}_1\bfz_1 + {\bf x}_1) =B^{-1} \tau^{-1}\bfz - A^{-1}\frakz +A^{-1}{\bf x}_1.$$
	which becomes $\bfz - A^{-1}\frakz - {\bf x}_1 = A - \frakn^{-1}\frakc - {\bf x}$ after the replacing $\bfz$ by $\tau B\bfz$, and ${\bf x}_1\in \ZZ^n(\mod A\ZZ^n)$ with $A^{-1}{\bf x}_1 \in A^{-1}\ZZ^n(\mod \ZZ^n)$. 
	
The right-hand side of (\ref{prelimid}) is then
	$$ \alpha(\frakz,\frakt)^{-1} \sum_{{\bf x}_1\in A^{-1}\ZZ^n(\mod \ZZ^n)} e(f_{\frakz,\frakt}(A{\bf x}))\Theta(z+{\bf x}_1-\frakn^{-1}\frakc, -({}^t A\tau_1 A)^{-1}).$$
	Now we have
	$$ f_{\frakz,\frakt}(B{\bf x}) = f_{\frakc,\frakn}({\bf x}),\ \ \ f_{\frakz_1,\frakt_1}(A{\bf x}_1)= f_{\frakc,-\frakn}({\bf x}_1),\ \ {}^t \frakz \frakt^{-1}\frakz = {}^t \frakc \frakn^{-1} \frakc,$$
	and $\sigma_{\frakt}=\sigma_{\frakn}$ since $\frakn = {}^t B \frakt B$. Therefore
	$$ F(\frakz,\frakt,\bfz,\tau)=\alpha(\frakz,\frakt)^{-1} = |\det(\frakt)|^{-\frac{1}{2}} e^{i\pi \sigma_{\frakn}/4} \exp(\pi i {}^t \frakc \frakn^{-1}\frakc).$$
	The identity (\ref{prelimid}) then becomes
	$$ \sum_{{\bf x}\in B^{-1}\ZZ^n(\mod \ZZ^n)}e(f_{\frakn,\frakc}({\bf x}))\Theta(z+{\bf x},\tau_B) = \det(i\frakt)^{-\frac{1}{2}} \exp(-i\pi {}^t\frakc \frakn^{-1}\frakc) \sum_{{\bf x}_1\in A^{-1}\ZZ^n(\mod \ZZ^n)} e(f_{-\frakn,\frakc}({\bf x}))\Theta(z +{\bf x}_1 - \frakn^{-1}\frakc,\tau_A),$$ 
	where
	$$ \tau_B = -({}^t B \tau B)^{-1},\ \ \ \tau_A = -({}^t A \tau_1 A)^{-1} = \tau_B - \frakn^{-1}.$$
	The statement of the theorem then results after replacing $\tau_B$ by $\tau$.
\end{proof}

\begin{cor}[Reciprocity]\label{GR} Let $\frakt\in \Sym_n(\QQ)$ be non-singular, write $\frakt=AB^{-1}$ in reduced form, and put $\frakn={}^tB A$. Let $\frakc\in \QQ^n$ be such that $\frakn+2\diag(\frakc)$ is integral and even. Then
	$$ \frac{1}{\sqrt{|\det(B)|}}\sum_{{\bf x}\in B^{-1}\ZZ^n\ (\mod \ZZ^n)} e(\frac{1}{2} {}^t {\bf x} \frakn {\bf x} + {}^t {\bf x} \frakc)= \exp(-\pi i{}^t\frakc \frakn^{-1} \frakc)\frac{e^{i\pi \sigma_{\frakn}/4}}{\sqrt{|\det(A)|}} \sum_{{\bf x}\in A^{-1}\ZZ^n (\mod \ZZ^n)} e(-\frac{1}{2}{}^t{\bf x} \frakn {\bf x} + {}^t{\bf x} \frakc).$$
\end{cor}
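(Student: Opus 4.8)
The plan is to obtain the Corollary from Theorem \ref{thmB} by a single degeneration of the modular variable, exactly as the $n=1$ reciprocity law was recovered from the Jacobi-theta identity in the remark following Proposition 2.1. Concretely, I would specialize the identity of Theorem \ref{thmB} to $\bfz=0$ and $\tau=itI_n$ (with $t>0$, so that both $\tau$ and $\tau-\frakn^{-1}$ lie in $\frakH_n$), and then let $t\rar\infty$. The prefactor $\det(i\frakt)^{-\frac{1}{2}}e^{-i\pi{}^t\frakc\frakn^{-1}\frakc}$ and the two finite sums $\sum e(f_{\frakn,\frakc}(\bfx))$, $\sum e(f_{-\frakn,\frakc}(\bfx_1))$ are all independent of $(\bfz,\tau)$, so the entire $t$-dependence sits in the Riemann theta factors, which I expect to collapse to $1$.

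For the left-hand side each summand carries $\Theta(\bfx,itI_n)$ with $\bfx\in B^{-1}\ZZ^n\subset\QQ^n$, so Lemma \ref{limlem} (with $u=0$) gives $\Theta(\bfx,itI_n)\rar 1$. On the right-hand side the modular variable is $\tau-\frakn^{-1}=-\frakn^{-1}+itI_n$; since $\frakn={}^tBA$ is integral and symmetric, $\frakn^{-1}$ is a real symmetric matrix, and the theta argument $\bfx_1-\frakn^{-1}\frakc$ is rational, hence real. Thus Lemma \ref{limlem} applies with $u=-\frakn^{-1}$ and yields $\Theta(\bfx_1-\frakn^{-1}\frakc,-\frakn^{-1}+itI_n)\rar 1$. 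Passing to the limit in Theorem \ref{thmB} therefore produces the purely finite identity
\[
\sum_{\bfx\in B^{-1}\ZZ^n(\mod\ZZ^n)} e(f_{\frakn,\frakc}(\bfx)) = \det(i\frakt)^{-\tfrac{1}{2}}\, e^{-i\pi{}^t\frakc\frakn^{-1}\frakc}\sum_{\bfx_1\in A^{-1}\ZZ^n(\mod\ZZ^n)} e(f_{-\frakn,\frakc}(\bfx_1)).
\]

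It then remains to unwind the constant. By Lemma \ref{siglem}, in the branch convention fixed for Theorem \ref{thmB}, the prefactor equals $\det(i\frakt)^{-\frac{1}{2}}=|\det(\frakt)|^{-\frac{1}{2}}e^{i\pi\sigma_{\frakt}/4}$; combined with $\frakt=AB^{-1}$, which gives $|\det(\frakt)|^{-\frac{1}{2}}=\sqrt{|\det(B)|}/\sqrt{|\det(A)|}$, and with $\frakn={}^tB\frakt B$, which gives $\sigma_{\frakn}=\sigma_{\frakt}$ by Sylvester's law of inertia, I would rewrite the constant as $\tfrac{\sqrt{|\det(B)|}}{\sqrt{|\det(A)|}}\,e^{i\pi\sigma_{\frakn}/4}\,e^{-i\pi{}^t\frakc\frakn^{-1}\frakc}$. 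Dividing both sides by $\sqrt{|\det(B)|}$ and expanding $f_{\frakn,\frakc}(\bfx)=\tfrac12{}^t\bfx\frakn\bfx+{}^t\bfx\frakc$ and $f_{-\frakn,\frakc}(\bfx_1)=-\tfrac12{}^t\bfx_1\frakn\bfx_1+{}^t\bfx_1\frakc$ then yields precisely the asserted reciprocity.

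The argument is short and there is no genuine obstacle beyond bookkeeping; the one point needing care is the application of Lemma \ref{limlem} on the right, where the modular variable degenerates along $-\frakn^{-1}+itI_n$ rather than along the imaginary axis. This is exactly the situation the lemma was proved for (arbitrary real symmetric $u$), so the translation by $-\frakn^{-1}$ causes no difficulty, and the only remaining subtlety is tracking the eighth-root-of-unity branch of $\det(i\frakt)^{-1/2}$ consistently with the convention of Lemma \ref{siglem}.
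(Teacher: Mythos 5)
Your proposal is correct and follows essentially the same route as the paper: specialize Theorem \ref{thmB} to $\tau=itI_n$, let $t\to\infty$ so that Lemma \ref{limlem} collapses all theta values to $1$, and evaluate the constant via Lemma \ref{siglem} together with $\sigma_{\frakn}=\sigma_{\frakt}$ and $|\det(\frakt)|=|\det(A)|/|\det(B)|$. Your extra care in applying Lemma \ref{limlem} on the right-hand side with $u=-\frakn^{-1}$, and in tracking the eighth-root-of-unity branch, only makes explicit what the paper's terser proof leaves implicit.
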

\begin{proof}Put $\tau=it\in i\RR$ in the statement of the theorem and take the limit as $t\rar\infty$. By Lemma \ref{limlem} all the theta values go to $1$. Write
	$$ (\det(-i\frakt)^{-\frac{1}{2}} =\frac{|\det(B)|^{\frac{1}{2}}}{|\det(A)|^{\frac{1}{2}}} e^{-i \pi \sigma_{\frakn}/4}$$
	by Lemma \ref{siglem}, then replace $\frakn$ by $-\frakn$ and correspondingly $\sigma_{\frakn}$ by $-\sigma_{\frakn}$.
\end{proof}

An equivalent formulation was stated as Theorem B in the introduction. The special case $\frakc=0$ generalizes the Landsberg-Schaar relation. 

\begin{cor}\label{GLS}Let $\frakt\in \Sym_n(\QQ)$ be non-singular, write $\frakt =AB^{-1}$ in reduced form, and assume $\frakn={}^tBA$ is even. Then
	$$ \frac{1}{\sqrt{|\det(B)|}}\sum_{{\bf x}\in \ZZ^n(\mod B)} e^{\pi i  {}^t {\bf x} \frakt {\bf x}} = \frac{e^{i\pi \sigma_{\frakt}/4}}{\sqrt{|\det(A)|}} \sum_{{\bf x}\in \ZZ^n(\mod A)} e^{-\pi i {}^t {\bf x} \frakt^{-1} {\bf x}}.$$
\end{cor}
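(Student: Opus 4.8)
The plan is to obtain Corollary \ref{GLS} as the special case $\frakc=0$ of the reciprocity Corollary \ref{GR}, followed by a linear change of variables on each side. First I would observe that when $\frakc=0$ the hypothesis of Corollary \ref{GR} that $\frakn+2\diag(\frakc)$ be integral and even reduces exactly to the standing assumption that $\frakn={}^tBA$ be even, and the phase $\exp(-\pi i\,{}^t\frakc\frakn^{-1}\frakc)$ collapses to $1$. Thus Corollary \ref{GR} immediately gives
$$\frac{1}{\sqrt{|\det(B)|}}\sum_{{\bf x}\in B^{-1}\ZZ^n(\mod \ZZ^n)}e(\tfrac{1}{2}{}^t{\bf x}\frakn{\bf x})=\frac{e^{i\pi\sigma_{\frakn}/4}}{\sqrt{|\det(A)|}}\sum_{{\bf x}\in A^{-1}\ZZ^n(\mod \ZZ^n)}e(-\tfrac{1}{2}{}^t{\bf x}\frakn{\bf x}).$$

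The remaining task is purely to translate the sums over $B^{-1}\ZZ^n(\mod\ZZ^n)$ and $A^{-1}\ZZ^n(\mod\ZZ^n)$ into sums over $\ZZ^n(\mod B)$ and $\ZZ^n(\mod A)$. On the left I would substitute ${\bf x}=B^{-1}{\bf y}$; since multiplication by $B^{-1}$ is a group isomorphism $\ZZ^n\isomto B^{-1}\ZZ^n$ carrying $B\ZZ^n$ onto $\ZZ^n$, it induces a bijection $\ZZ^n(\mod B)\isomto B^{-1}\ZZ^n(\mod\ZZ^n)$. Under this substitution $\tfrac{1}{2}{}^t{\bf x}\frakn{\bf x}=\tfrac{1}{2}{}^t{\bf y}\,({}^tB^{-1}\frakn B^{-1})\,{\bf y}$, and since $\frakn={}^tBA$ one computes ${}^tB^{-1}\frakn B^{-1}=AB^{-1}=\frakt$, so $e(\tfrac{1}{2}{}^t{\bf x}\frakn{\bf x})=e^{\pi i\,{}^t{\bf y}\frakt{\bf y}}$, which is precisely the left-hand side of the corollary. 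Symmetrically, on the right I would substitute ${\bf x}=A^{-1}{\bf y}$ to get a bijection onto $\ZZ^n(\mod A)$, and compute ${}^tA^{-1}\frakn A^{-1}={}^tA^{-1}{}^tB={}^t(BA^{-1})=\frakt^{-1}$, using that $\frakt^{-1}$ is symmetric; hence $e(-\tfrac{1}{2}{}^t{\bf x}\frakn{\bf x})=e^{-\pi i\,{}^t{\bf y}\frakt^{-1}{\bf y}}$. Finally, since $A=\frakt B$ we have $\frakn={}^tB\frakt B$, so $\frakn$ and $\frakt$ are congruent symmetric matrices and $\sigma_{\frakn}=\sigma_{\frakt}$ by Sylvester's law of inertia, which converts the prefactor $e^{i\pi\sigma_{\frakn}/4}$ into $e^{i\pi\sigma_{\frakt}/4}$.

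There is essentially no serious obstacle here: the argument is a routine specialization and change of variables. The only points demanding care are checking that each substitution genuinely descends to a bijection of the relevant finite quotients $\ZZ^n/B\ZZ^n$ and $\ZZ^n/A\ZZ^n$, and verifying the two congruence identities ${}^tB^{-1}\frakn B^{-1}=\frakt$ and ${}^tA^{-1}\frakn A^{-1}=\frakt^{-1}$ together with the signature equality $\sigma_{\frakn}=\sigma_{\frakt}$; assembling these identifications then yields the stated relation exactly.
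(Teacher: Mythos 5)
Your proof is correct and follows exactly the route the paper intends: Corollary \ref{GLS} is obtained by setting $\frakc=0$ in Corollary \ref{GR} and rewriting the sums over $B^{-1}\ZZ^n(\mod\ZZ^n)$ and $A^{-1}\ZZ^n(\mod\ZZ^n)$ via the substitutions $\bfx=B^{-1}\bfy$, $\bfx=A^{-1}\bfy$, using ${}^tB^{-1}\frakn B^{-1}=\frakt$, ${}^tA^{-1}\frakn A^{-1}=\frakt^{-1}$, and $\sigma_{\frakn}=\sigma_{\frakt}$ (which the paper itself records via $\frakn={}^tB\frakt B$). The paper leaves these routine verifications implicit, so your write-up simply supplies the details of the same argument.
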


\bibliographystyle{alpha}
\bibliography{refdb}

\end{document}